\newtheorem{theorem}{Theorem}[section]
\newtheorem{corollary}[theorem]{Corollary}
\newtheorem{definition}[theorem]{Definition}
\newtheorem{lemma}[theorem]{Lemma}
\newtheorem{remark}[theorem]{Remark}
\begin{document}

\title{On compressions and generalized spectra of operators over $C^{*}$-algebras }

\maketitle
\begin{center}
	\author{Stefan Ivkovi\'{c}}
\end{center}
\begin{abstract}
	In the classical operator theory, there are several versions of spectra, related to special classes of operators (Fredholm, semi-Fredholm, upper/lower semi-Fredholm, etc.). We generalize these notions for adjointable operators on Hilbert $C^*$-modules replacing scalars by the center of the algebra, and show that most relations between these spectra are still true for these generalized versions. The relation between these spectra of an operator and those of its compressions is also transferred to the case of Hilbert $C^*$-modules. 
	\\
	\textbf{Keywords} Hilbert C*-modules, semi-Fredholm operators, compressions, Weyl spectrum\\
	\textbf{Mathematics Subject Classification (2010)} Primary MSC 47A53; Secondary MSC 46L08\\
\end{abstract}

\section{Introduction }
The Fredholm and semi-Fredholm theory on Hilbert and Banach spaces started by studying the certain integral equations which was done in the pioneering work by Fredholm in 1903 in \cite{F}. After that the abstract theory of Fredholm and semi-Fredholm operators on Banach spaces was during the time developed in numerous papers.\\
In this theory one studies for instance the properties and the relationship between certain classes of operators on a Banach space $X$ among which the most elementary are $\Phi(X), \Phi_{+}(X), \Phi_{-}(X),$ the classes of all Fredholm, upper semi-Fredholm and lower semi-Fredholm operators on X, respectively. For the applications in this paper, we also need to recall the definition of the following subclasses:
$$\Phi_{o}(X)=\lbrace A \in \Phi(X) \mid \mathrm{ index } A=0 \rbrace ,$$
$$\Phi_{+}^{-}(X)=\lbrace A \in \Phi_{+}(X)\mid \mathrm{ index } A<0 \rbrace,  $$
$$\Phi_{-}^{+}(X)=\lbrace A \in \Phi_{-}(X)\mid \mathrm{ index } A>0 \rbrace . $$
Some aspects of spectral semi-Fredholm theory were investigated in \cite{MV}, \cite{ZE}. In \cite{ZE} Zemanek considered compressions of bounded operators on Banach spaces and he gave a description of the Weyl spectrum of a bounded operator A in terms of the intersection of ordinary spectra of all compressions of A. More precisely he obtained:\\
$\sigma_{e w}({A}) = \cap \lbrace \sigma (A_{p}) \mid A_{p}$ is a compression of $A \rbrace $ where $A$ is a bounded operator on a Banach space $X$  and  $\sigma_{e w}(A)=\lbrace \lambda \in \mathbb{C} \mid (A-\lambda) \notin \Phi_{o}(X) \rbrace .$ Similarly he obtained \\
\begin{center}
	$\sigma_{e a}({A}) = \cap \lbrace \sigma_{a} (A_{p}) \mid A_{p}$ is a compression of $A \rbrace ,$
\end{center}
\begin{center}
	$\sigma_{e d}({A}) = \cap \lbrace \sigma_{d} (A_{p}) \mid A_{p}$ is a compression of $A \rbrace ,$
\end{center}
where 
\begin{center}
	$\sigma_{e a}({A}) = \lbrace \lambda \in \mathbb{C} \mid (A-\lambda) \notin \Phi_{+}^{-} (X) \rbrace ,$ 
\end{center}
\begin{center}
	$\sigma_{e d}({A}) = \lbrace \lambda \in \mathbb{C} \mid (A-\lambda) \notin \Phi_{-}^{+} (X) \rbrace ,$ 
\end{center}
\begin{center}
	$\sigma_{a}(A_{p})=\lbrace \lambda \in \mathbb{C} \mid (A_{p}-\lambda)$ is not bounded below $\rbrace,$
\end{center}
\begin{center}
	$\sigma_{d}(A_{p})=\lbrace \lambda \in \mathbb{C} \mid (A_{p}-\lambda)$ is not surjective $\rbrace$.
\end{center}

In \cite{MV} Mili\v{c}i\'{c} and Veseli\'{c} have considered boundaries of several kinds of Fredholm spectra of bounded operators on a Banach space $X$ and they have obtained certain chains of inclusions of the boundaries of these Fredholm spectra such as \\
$$\partial \sigma_{e w}(A) \subset \partial \sigma_{e f}(A) \subset 
\begin{array}{l}
\partial \sigma_{e \beta}(A) \\
\partial \sigma_{e \alpha}(A)\\
\end{array}
\subset \partial \sigma_{e k}(A)$$
for $A \in B(X)$ where 
\begin{center}
	$\sigma_{e f}(A) = \lbrace \lambda \in \mathbb{C} \mid (A-\lambda) \notin \Phi (X) \rbrace ,$ 
\end{center}
\begin{center}
	$\sigma_{e \alpha}(A) = \lbrace \lambda \in \mathbb{C} \mid (A-\lambda) \notin \Phi_{+} (X) \rbrace ,$ 
\end{center}
\begin{center}
	$\sigma_{e \beta}(A) = \lbrace \lambda \in \mathbb{C} \mid (A-\lambda) \notin \Phi_{-} (X) \rbrace ,$ 
\end{center}
\begin{center}
	$\sigma_{e k}(A) = \lbrace \lambda \in \mathbb{C} \mid (A-\lambda) \notin \Phi_{+} (X) \cup  \Phi_{-} (X) \rbrace .$ 
\end{center}
Moreover, in \cite{ZZRD}, the following has been proved:  
\begin{center}
	$\partial \sigma_{e w}(A) \subset \partial \sigma_{e a}(A) \subset \partial \sigma_{e \alpha }(A),$
\end{center}
\begin{center}
	$\partial \sigma_{e w}(A) \subset \partial \sigma_{e d}(A) \subset \partial \sigma_{e \beta }(A).$
\end{center}
Now, Hilbert $C^{*}$-modules are natural generalization of Hilbert spaces when the field of scalars is replaced by a $C^{*}$-algebra. Some important and recent results in the theory of Hilbert $C^{*}$-modules can be found in \cite{AH}, \cite{I}, \cite{L}, \cite{MT}, \cite{MSFC}, \cite{S3}, \cite{W}.\\
Fredholm theory on Hilbert $C^*$-modules as a generalization of Fredholm theory on Hilbert spaces was started by Mishchenko and Fomenko in \cite{MF}. They have worked out the notion of a Fredholm operator on the standard module $H_{\mathcal{A}} $ and proved the generalization of the Atkinson theorem.
In \cite{I} we went further in this direction and defined semi-Fredholm operators on Hilbert $C^{*}$-modules. We proved several properties of these generalized semi-Fredholm operators on Hilbert $C^{*}$-modules as an analogue or generalization of the well-known properties of classical semi-Fredholm operators on Hilbert and Banach spaces. We introduced new classes of operators on $H_{\mathcal{A}}$ 
as various generalizations of the classes $\Phi_{+}(H) ,$ $\Phi_{-}(H),$ $ \Phi_{-}^{+}(H) ,$ $ \Phi_{+}^{-}(H) $ where $H$ is a Hilbert space.\\
Inspired by these results, our idea in this paper was to obtain an analogue or generalized version of the above mentioned results in \cite{MV}, \cite{ZE}, \cite{ZZRD} in the setting of $\mathcal{A}$-Fredholm and semi-$\mathcal{A}$-Fredholm operators on Hilbert $C^{*}$-modules defined in \cite{I}, \cite{MF}.  In the third section of this paper we define other, additional classes of operators on the standard module $H_{\mathcal{A}}$ denoted by $\mathcal{M} \tilde{\Phi}_{o}(H_{\mathcal{A}}),$ $\widehat{\mathcal{M} \Phi}_{+}^{-}(H_{\mathcal{A}}),$ $\widehat{\mathcal{M} \Phi}_{-}^{+}(H_{\mathcal{A}})$ as a generalization of the classes $\Phi_{o}(H),$ $\Phi_{+}^{-}(H),$ $\Phi_{-}^{+}(H)$ and we show that these new classes of operators on $H_{\mathcal{A}}$ are open in the norm topology. \\
We consider also generalized operators of bounded, adjointable compressions on $H_{\mathcal{A}}.$ A natural generalization to Hilbert $C^{*}$-modules of compressions on Hilbert spaces is the following:\\
Let $\mathrm{P}(H_{\mathcal{A}})=\lbrace \mathrm{P} \in B(H_{\mathcal{A}}) \mid \mathrm{P}$ is the skew or the orthogonal projection and $\ker (\mathrm{P})$ is finitely generated$\rbrace .$ For $\mathrm{F} \in B(H_{\mathcal{A}})$ and $\mathrm{P} \in \mathrm{P}(H_{\mathcal{A}}),$ the compression $\mathrm{F}_{\mathrm{P}} \in B(\mathrm{Im} \mathrm{P})$ is given by $\mathrm{F}_{\mathrm{P}}=\mathrm{P}\mathrm{F}_{\mid_{\mathrm{Im} \mathrm{P}}}.$\\ 
Next, given an $\mathcal{A}$-linear, bounded, adjointable operator F on $H_{\mathcal{A}},$ we consider the operators of the form $\mathrm{F}-\alpha 1$ as $\alpha$ varies over the center of $\mathcal{A},$ denoted by  $Z(\mathcal{A}),$ and this gives rise to a different kind of spectra of F in $Z(\mathcal{A})$ as a generalization of ordinary spectra of F in $\mathbb{C} .$  \\
Using these new classes of  operators on $H_{\mathcal{A}}$ defined in \cite{MF} and \cite{I} together with the generalized spectra in $Z(\mathcal{A})$ we obtain several results as a generalization of the results from classical spectral semi-Fredholm theory given in \cite{MV}, \cite{ZE}, \cite{ZZRD}.\\
Although Fredholm and semi-Fredholm theory on Hilbert $C^{*}$-modules was established in \cite{I}, \cite{MF}, the spectral part of this theory, as an analogue or generalization of the classical spectral semi-Fredholm theory on Hilbert and Banach spaces, was not considered. The aim of this paper is to make first step into new, spectral semi-Fredholm theory on Hilbert $C^{*}$-modules in the setting of generalized spectra in the center of $C^{*}$-algebra. Our hope is that this will maybe open the way for new and further research in this direction.

\section{Preliminaries }
In this section we are going to introduce the notation, and the definitions in \cite{I} that are needed in this paper.	Throughout this paper we let $\mathcal{A}$ be a unital $\mathrm{C}^{*}$-algebra, $H_{\mathcal{A}}$ be the standard countably generated Hilbert module over $\mathcal{A}$ and we let $B^{a}(H_{\mathcal{A}})$ denote the set of all bounded , adjointable operators on $H_{\mathcal{A}}.$ Next, for $\alpha \in Z(\mathcal{A})$ we let $ \alpha \mathrm{I}$ denote the operator from $H_{\mathcal{A}}$  into $H_{\mathcal{A}}$ given by $\alpha \mathrm{I}(x)=x\alpha $ for all $x \in H_{\mathcal{A}}.$ The operator  $ \alpha \mathrm{I}$ is obviously $\mathcal{A}$-linear since $\alpha \in Z(\mathcal{A}) $ and it is adjointable with its adjoint $ \alpha^{*} \mathrm{I}.$ According to \cite[ Definition 1.4.1] {MT}, we say that a Hilbert $\mathrm{C}^*$-module $M$ over $\mathcal{A}$ is finitely generated if there exists a finite set $ \lbrace x_{i} \rbrace \subseteq M $  such that $M $ equals the linear span (over $\mathrm{C}$ and $\mathcal{A} $) of this set.\\
\begin{definition}\label{d210n} \cite[Definition 2.1]{I} 
	Let $\mathrm{F} \in B^{a}(H_{\mathcal{A}}).$ 
	We say that $\mathrm{F} $ is an upper semi-{$\mathcal{A}$}-Fredholm operator if there exists a decomposition 
	$$H_{\mathcal{A}} = M_{1} \tilde \oplus N_{1} \stackrel{\mathrm{F}}{\longrightarrow}   M_{2} \tilde \oplus N_{2}= H_{\mathcal{A}}$$
	with respect to which $\mathrm{F}$ has the matrix\\
	\begin{center}
		$\left\lbrack
		\begin{array}{cc}
		\mathrm{F}_{1} & 0 \\
		0 & \mathrm{F}_{4} \\
		\end{array}
		\right \rbrack,
		$
	\end{center}
	where $\mathrm{F}_{1}$ is an isomorphism $M_{1},M_{2},N_{1},N_{2}$ are closed submodules of $H_{\mathcal{A}} $ and $N_{1}$ is finitely generated. Similarly, we say that $\mathrm{F}$ is a lower semi-{$\mathcal{A}$}-Fredholm operator if all the above conditions hold except that in this case we assume that $N_{2}$ ( and not $N_{1}$ ) is finitely generated.	
\end{definition}
Set
\begin{center}
	$\mathcal{M}\Phi_{+}(H_{\mathcal{A}})=\lbrace \mathrm{F} \in B^{a}(H_{\mathcal{A}}) \mid \mathrm{F} $ is upper semi-{$\mathcal{A}$}-Fredholm $\rbrace ,$	
\end{center}
\begin{center}
	$\mathcal{M}\Phi_{-}(H_{\mathcal{A}})=\lbrace \mathrm{F} \in B^{a}(H_{\mathcal{A}}) \mid \mathrm{F} $ is lower semi-{$\mathcal{A}$}-Fredholm $\rbrace ,$	
\end{center}
\begin{center}
	$\mathcal{M}\Phi(H_{\mathcal{A}})=\lbrace \mathrm{F} \in B^{a}(H_{\mathcal{A}}) \mid \mathrm{F} $ is $\mathcal{A}$-Fredholm operator on $H_{\mathcal{A}}\rbrace .$
\end{center}
\begin{remark}\cite{I}
	Notice that if $M,N$ are two arbitrary Hilbert modules $\mathrm{C}^{*}$-modules, the definition above could be generalized to the classes $\mathcal{M}\Phi_{+}(M,N)$ and $\mathcal{M}\Phi_{-}(M,N)$.\\
	Recall that by \cite[ Definition 2.7.8]{MT}, originally given in \cite{MF}, when $\mathrm{F} \in \mathcal{M}\Phi(H_{\mathcal{A}})     $ and 
	$$ H_{\mathcal{A}} = M_{1} \tilde \oplus {N_{1}}\stackrel{\mathrm{F}}{\longrightarrow} M_{2} \tilde \oplus N_{2}= H_{\mathcal{A}} $$
	is an $ \mathcal{M}\Phi$-decomposition for $  \mathrm{F}   $, then the index of $\mathrm{F}$ is defined by index $ \mathrm{F}=[N_{1}]-[N_{2}] \in K(\mathcal{A})    $ where $[N_{1}]    $ and $ [N_{2}] $ denote  the isomorphism classes of $ N_{1}    $ and $ N_{2} $ respectively. By \cite[ Definition 2.7.9]{MT}, the index is well defined and does not depend on the choice of $\mathcal{M}\Phi$-decomposition for $\mathrm{F}$. As regards the $K$-group $K (\mathcal{A})$, it is worth mentioning that it is not true in general that $[M]=[N]$ implies $ M \cong N    $ for two finitely generated submodules $M, N$ of $ H_{\mathcal{A}}    $. If $K (\mathcal{A})$ satisfies the property $[N]=[M]$ implies $ N \cong M     $ for any two finitely generated, closed submodules $M, N$ of $ H_{\mathcal{A}}     $, then $K (\mathcal{A})   $ is said to satisfy "the cancellation property", see \cite[Section 6.2] {W}.	
\end{remark}
\begin{definition} \label{d280n} \cite[Definition 5.1]{I}
	Let $\mathrm{F} \in \mathcal{M}\Phi (H_{\mathcal{A}})$. 
	We say that $\mathrm{F} \in \tilde {\mathcal{M}} \Phi_{+}^{-} (H_{\mathcal{A}})$ if there exists a decomposition 
	$$H_{\mathcal{A}} = M_{1} \tilde \oplus {N_{1}}\stackrel{\mathrm{F}}{\longrightarrow} M_{2} \tilde \oplus N_{2}= H_{\mathcal{A}} $$
	with respect to which $\mathrm{F}$ has the matrix
	\begin{center}
		$\left\lbrack
		\begin{array}{cc}
		\mathrm{F}_{1} & 0 \\
		0 & \mathrm{F}_{4} \\
		\end{array}
		\right \rbrack,
		$
	\end{center}
	where $\mathrm{F}_{1}$ is an isomorphism, $N_{1},N_{2}$ are closed, finitely generated and $N_{1} \preceq N_{2},$ that is $N_{1}$ is isomorphic to a closed submodule of $N_{2}$. We define similarly the class $\tilde {\mathcal{M}}\Phi_{-}^{+} (H_{\mathcal{A}})$, the only difference in this case is that $N_{2} \preceq N_{1}$. Then we set
	$$\mathcal{M}\Phi_{+}^{-} (H_{\mathcal{A}})= (\tilde {\mathcal{M}} \Phi_{+}^{-} (H_{\mathcal{A}})) \cup (\mathcal{M}\Phi_{+} (H_{\mathcal{A}}) \setminus \mathcal{M}\Phi (H_{\mathcal{A}}))$$ 
	and
	$$\mathcal{M}\Phi_{-}^{+} (H_{\mathcal{A}})= (\tilde {\mathcal{M}}\Phi_{-}^{+} (H_{\mathcal{A}})) \cup (\mathcal{M}\Phi_{-} (H_{\mathcal{A}}) \setminus \mathcal{M}\Phi (H_{\mathcal{A}}))$$
\end{definition}
\begin{remark}
	The notation $\tilde{ \oplus} $ denotes the direct sum of modules without orthogonality, as given in \cite{MT}.\\
	At the end of this section, we also define another class of operators on $H_{\mathcal{A}}$ which we are going to use later in section 5.
\end{remark}
\begin{definition}
	We set $$ \mathcal{M}\Phi_{0}(H_{\mathcal{A}})=\lbrace \mathrm{F} \in \mathcal{M}\Phi(H_{\mathcal{A}}) \mid \textrm{ index } \mathrm{F}=0 \rbrace .$$
\end{definition}

\section{Compressions}
Throughout this paper, given an operator $\mathrm{T} \in B^{a}(H_{\mathcal{A}}),$ we let $\mathrm{R}(\mathrm{T}), \mathrm{N}(\mathrm{T}) $ denote the image of $\mathrm{T}$ and the kernel of $\mathrm{T}$ respectively. We start with the following lemma which is a generalization of \cite[Lemma 2.10.1 ]{ZZRD} originally given in \cite{PO}. The notion "projection" will in this paper generally mean skew projection, however we will specify throughout the paper whenever the projection is orthogonal.  
\begin{lemma} \label{l410} 
	Let  $\mathrm{F},\mathrm{P} \in B^{a}(H_{\mathcal{A}})  $  and suppose that  $  \mathrm{P} $  is a projection such that  $ \mathrm{N}(\mathrm{P})  $  is finitely generated. Then  $ \mathrm{F} \in \mathcal{M} \Phi (H_{\mathcal{A}})  $  if and only if  $$ \mathrm{P}\mathrm{F}_{{\mid}_{\mathrm{R}(\mathrm{P})}} \in \mathcal{M} \Phi (\mathrm{R}(\mathrm{P}))  .$$	
\end{lemma}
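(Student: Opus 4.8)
The plan is to transport the classical argument of \cite{PO}, \cite[Lemma 2.10.1]{ZZRD} to the module setting, using the direct sum decomposition induced by $\mathrm{P}$ together with the generalized Atkinson theorem of Mishchenko--Fomenko. Since $\mathrm{P} \in B^{a}(H_{\mathcal{A}})$ is an idempotent, $\mathrm{R}(\mathrm{P})$ is closed and $H_{\mathcal{A}} = \mathrm{R}(\mathrm{P}) \tilde\oplus \mathrm{N}(\mathrm{P})$, with $\mathrm{N}(\mathrm{P})$ finitely generated by hypothesis. Writing $\mathrm{F}$ as a $2\times 2$ operator matrix with respect to this decomposition, its $(1,1)$-entry is precisely $\mathrm{F}_{\mathrm{P}} = \mathrm{P}\mathrm{F}_{\mid_{\mathrm{R}(\mathrm{P})}}$, and this same map (as an operator $\mathrm{R}(\mathrm{P}) \to \mathrm{R}(\mathrm{P})$) is the restriction to $\mathrm{R}(\mathrm{P})$ of the block-diagonal operator $\widetilde{\mathrm{F}} := \mathrm{P}\mathrm{F}\mathrm{P} + (\mathrm{I} - \mathrm{P}) \in B^{a}(H_{\mathcal{A}})$, which commutes with $\mathrm{P}$ and restricts on $\mathrm{R}(\mathrm{P})$ to $\mathrm{F}_{\mathrm{P}}$ and on $\mathrm{N}(\mathrm{P})$ to the identity. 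Note that $\widetilde{\mathrm{F}}$ is adjointable on $H_{\mathcal{A}}$, so the first half of the argument stays inside $B^{a}(H_{\mathcal{A}})$.

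First I would reduce from $\mathrm{F}$ to $\widetilde{\mathrm{F}}$. A short computation gives
\[
\mathrm{F} - \widetilde{\mathrm{F}} = \mathrm{P}\mathrm{F}(\mathrm{I}-\mathrm{P}) + (\mathrm{I}-\mathrm{P})(\mathrm{F}-\mathrm{I}),
\]
where the first term is post-composed with nothing but has $\mathrm{I}-\mathrm{P}$ as a right factor, hence factors through $\mathrm{N}(\mathrm{P})$, while the second has its range inside $\mathrm{N}(\mathrm{P})$; since $\mathrm{N}(\mathrm{P})$ is finitely generated, $\mathrm{F} - \widetilde{\mathrm{F}}$ has finitely generated range, i.e. is ``$\mathcal{A}$-compact''. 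Using the generalized Atkinson theorem (\cite{MF}, \cite[\S2.7]{MT}) together with the fact — which I would quote or extract from \cite{I}, \cite{MF} — that $\mathcal{M}\Phi(H_{\mathcal{A}})$ is invariant under perturbations with finitely generated range (the analogue of stability of Fredholm operators under compact perturbation), one gets $\mathrm{F} \in \mathcal{M}\Phi(H_{\mathcal{A}})$ if and only if $\widetilde{\mathrm{F}} \in \mathcal{M}\Phi(H_{\mathcal{A}})$.

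It then remains to prove $\widetilde{\mathrm{F}} \in \mathcal{M}\Phi(H_{\mathcal{A}})$ if and only if $\mathrm{F}_{\mathrm{P}} \in \mathcal{M}\Phi(\mathrm{R}(\mathrm{P}))$. For ``$\Leftarrow$'' I would take an $\mathcal{M}\Phi$-decomposition $\mathrm{R}(\mathrm{P}) = M_{1} \tilde\oplus N_{1} \to M_{2} \tilde\oplus N_{2} = \mathrm{R}(\mathrm{P})$ for $\mathrm{F}_{\mathrm{P}}$ with $N_{1}, N_{2}$ finitely generated and the $M$-block an isomorphism, and absorb $\mathrm{N}(\mathrm{P})$ into the defect summands: the decomposition $H_{\mathcal{A}} = M_{1} \tilde\oplus (N_{1} \tilde\oplus \mathrm{N}(\mathrm{P})) \to M_{2} \tilde\oplus (N_{2} \tilde\oplus \mathrm{N}(\mathrm{P})) = H_{\mathcal{A}}$ makes $\widetilde{\mathrm{F}}$ block-diagonal with the same isomorphism block and with finitely generated modules $N_{i} \tilde\oplus \mathrm{N}(\mathrm{P})$, so $\widetilde{\mathrm{F}} \in \mathcal{M}\Phi(H_{\mathcal{A}})$. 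For ``$\Rightarrow$'' I would use Atkinson once more: a parametrix $D$ for $\widetilde{\mathrm{F}}$ on $H_{\mathcal{A}}$, with $\mathrm{I}-D\widetilde{\mathrm{F}}$ and $\mathrm{I}-\widetilde{\mathrm{F}}D$ of finitely generated range, compresses, using $\mathrm{P}\widetilde{\mathrm{F}} = \widetilde{\mathrm{F}}\mathrm{P}$, to $D_{\mathrm{P}} = \mathrm{P}D_{\mid_{\mathrm{R}(\mathrm{P})}}$ with $\mathrm{I}_{\mathrm{R}(\mathrm{P})} - D_{\mathrm{P}}\mathrm{F}_{\mathrm{P}} = \mathrm{P}(\mathrm{I}-D\widetilde{\mathrm{F}})_{\mid_{\mathrm{R}(\mathrm{P})}}$ and $\mathrm{I}_{\mathrm{R}(\mathrm{P})} - \mathrm{F}_{\mathrm{P}}D_{\mathrm{P}} = \mathrm{P}(\mathrm{I}-\widetilde{\mathrm{F}}D)_{\mid_{\mathrm{R}(\mathrm{P})}}$, both of finitely generated range; hence $\mathrm{F}_{\mathrm{P}}$ is $\mathcal{A}$-Fredholm on $\mathrm{R}(\mathrm{P})$ by the Atkinson characterization applied to the Hilbert module $\mathrm{R}(\mathrm{P})$.

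The points I expect to need the most care are the following. First, one must have available the exact statement that finitely generated range operators are the ideal modulo which $\mathcal{A}$-Fredholmness is detected, and that $\mathcal{M}\Phi(H_{\mathcal{A}})$ is stable under such perturbations; this is the $\mathcal{A}$-analogue of classical compact-perturbation stability and should be isolated from \cite{MF}, \cite{I}. Second, when $\mathrm{P}$ is a genuinely skew projection, $\mathrm{R}(\mathrm{P})$ need not be orthogonally complemented and $\mathrm{F}_{\mathrm{P}}$ need not be adjointable on $\mathrm{R}(\mathrm{P})$, so one must work with the $\mathcal{M}\Phi$-classes for bounded (not necessarily adjointable) operators on a Hilbert $\mathrm{C}^{*}$-module, in the spirit of the Remark following Definition \ref{d210n}. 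Third — and this I expect to be the genuine obstacle — one needs the generalized Atkinson theorem and the $\mathcal{M}\Phi$-machinery of \cite{MF}, \cite{I} to apply not only to $H_{\mathcal{A}}$ but to the submodule $\mathrm{R}(\mathrm{P})$, which is a complemented submodule of $H_{\mathcal{A}}$ with finitely generated complement; establishing (or citing) that the theory transfers to such modules is precisely what makes the ``$\Rightarrow$'' direction go through.
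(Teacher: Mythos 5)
Your reduction of $\mathrm{F}$ to $\widetilde{\mathrm{F}}=\mathrm{P}\mathrm{F}\mathrm{P}+(\mathrm{I}-\mathrm{P})$ by an $\mathcal{A}$-compact perturbation is fine (since $\mathrm{I}-\mathrm{P}$ is an adjointable idempotent with closed, finitely generated range, it is $\mathcal{A}$-compact, and $\mathcal{M}\Phi(H_{\mathcal{A}})$ is stable under compact perturbations by the Mishchenko--Fomenko/Atkinson theorem on $H_{\mathcal{A}}$), and your absorption argument for the implication $\mathrm{P}\mathrm{F}_{\mid_{\mathrm{R}(\mathrm{P})}}\in\mathcal{M}\Phi(\mathrm{R}(\mathrm{P}))\Rightarrow\widetilde{\mathrm{F}}\in\mathcal{M}\Phi(H_{\mathcal{A}})$ is correct, so the ``if'' half of the lemma goes through on your route. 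The genuine gap is in the forward direction. Compressing a parametrix only shows that $\mathrm{P}\mathrm{F}_{\mid_{\mathrm{R}(\mathrm{P})}}$ is invertible modulo operators of finitely generated range \emph{on the module} $\mathrm{R}(\mathrm{P})$; to conclude membership in $\mathcal{M}\Phi(\mathrm{R}(\mathrm{P}))$, which in this paper means the existence of a decomposition of $\mathrm{R}(\mathrm{P})$ with an isomorphism block and finitely generated complements, you need an Atkinson-type theorem on $\mathrm{R}(\mathrm{P})$. The theorem you invoke from \cite{MF}, \cite{MT} is proved for the standard module $H_{\mathcal{A}}$, not for an arbitrary countably generated module; it would transfer if one knew, say, that $\mathrm{R}(\mathrm{P})\cong H_{\mathcal{A}}$ (i.e.\ that a complement of a finitely generated submodule of $H_{\mathcal{A}}$ is isomorphic to $H_{\mathcal{A}}$), but you neither prove nor cite such a fact, and you yourself flag this transfer as ``the genuine obstacle.'' Since producing an $\mathcal{M}\Phi$-decomposition of $\mathrm{R}(\mathrm{P})$ out of Fredholmness on $H_{\mathcal{A}}$ is exactly the substantive content of Lemma \ref{l410}, deferring it leaves the proof incomplete.

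For comparison, the paper closes precisely this point by hand, never passing through invertibility modulo compacts on $\mathrm{R}(\mathrm{P})$: from $\mathrm{F}\in\mathcal{M}\Phi(H_{\mathcal{A}})$ and $\mathrm{P}\in\mathcal{M}\Phi(H_{\mathcal{A}})$ one gets $\mathrm{P}\mathrm{F}\mathrm{P}\in\mathcal{M}\Phi(H_{\mathcal{A}})$, takes an $\mathcal{M}\Phi$-decomposition $H_{\mathcal{A}}=M\tilde{\oplus}N\rightarrow M'\tilde{\oplus}N'=H_{\mathcal{A}}$ for $\mathrm{P}\mathrm{F}\mathrm{P}$, shows that $\mathrm{P}(M)$ and $M'$ are closed and orthogonally complementable \emph{inside} $\mathrm{R}(\mathrm{P})$ with finitely generated complements (using \cite[Theorem 2.3.3]{MT} and a comparison of the two decompositions of $H_{\mathcal{A}}$), and then diagonalizes as in \cite[Lemma 2.7.10]{MT} to exhibit an $\mathcal{M}\Phi$-decomposition of $\mathrm{R}(\mathrm{P})$ for $\mathrm{P}\mathrm{F}_{\mid_{\mathrm{R}(\mathrm{P})}}$; the converse is likewise done by extending a decomposition of $\mathrm{R}(\mathrm{P})$ by $\mathrm{N}(\mathrm{P})$ and diagonalizing. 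If you want to keep your Calkin-algebra strategy, you must first establish the structural fact that the $\mathcal{M}\Phi$-theory (in particular the Atkinson characterization) holds on $\mathrm{R}(\mathrm{P})$; otherwise you should replace the parametrix step by a direct decomposition argument of the paper's type.
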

\begin{proof}
	Suppose first that  $\mathrm{F} \in \mathcal{M} \Phi (H_{\mathcal{A}})   $ . Observe that since  $ \mathrm{N}(\mathrm{P})  $  is finitely generated,  $ \mathrm{P} \in \mathcal{M} \Phi (H_{\mathcal{A}})   $  also.
	Hence  $ \mathrm{P}\mathrm{F}\mathrm{P} \in \mathcal{M} \Phi (H_{\mathcal{A}})   $  by \cite[Lemma 2.17 ]{MT}. \\
	Let  
	$$H_{\mathcal{A}} = M \tilde \oplus N\stackrel{\mathrm{P}\mathrm{F}\mathrm{P}}{\longrightarrow}   M^{\prime} \tilde \oplus N^{\prime}= H_{\mathcal{A}}   $$
	be a decomposition w.r.t. which  $ \mathrm{P}\mathrm{F}\mathrm{P}   $  has the matrix  
	$	\left\lbrack
	\begin{array}{cc}
	(\mathrm{P}\mathrm{F}\mathrm{P})_{1} & 0 \\
	0 & (\mathrm{P}\mathrm{F}\mathrm{P})_{4} \\
	\end{array}
	\right \rbrack
	$
	where  $ (\mathrm{P}\mathrm{F}\mathrm{P})_{1} $ is an isomorphism,  $ N, N^{\prime}   $  are finitely generated. By the proof of\\
	\cite[Theorem 2.2 ]{I} part  $ 2) \Rightarrow 1) $  we know that  $ \mathrm{P}(M)  $  is closed. Moreover, \\
	by \cite[Theorem 2.7.6 ]{MT} we may assume that $M$ is orthogonally complementable. Hence  $ \mathrm{P}_{{\mid}_{M}}  $  could be viewed as an adjointable operator from $M$ into $\mathrm{R}(\mathrm{P})$ with closed image. By \cite[Theorem 2.3.3 ]{MT} $\mathrm{P}(M)$ is then orthogonallly complementable in $\mathrm{R}(\mathrm{P}),$ that is  $ \mathrm{P}(M) \oplus \tilde{N} = \mathrm{R}(\mathrm{P})  $  for some closed submodule  $ \tilde{N}  $ . With respect to the decomposition
	$$H_{\mathcal{A}} = M \tilde \oplus N\stackrel{\mathrm{P}}{\longrightarrow}  \mathrm{P}(M) \tilde \oplus (\tilde{N} \tilde{\oplus} \mathrm{N}(\mathrm{P}))= H_{\mathcal{A}} ,  $$  
	$ \mathrm{P} $ has the matrix
	$	\left\lbrack
	\begin{array}{cc}
	\mathrm{P}_{1} &\mathrm{P}_{2} \\
	0 & \mathrm{P}_{4} \\
	\end{array}
	\right \rbrack
	,$ 
	where  $ \mathrm{P}_{1}  $  is an isomorphism. Hence  $ \mathrm{P}_{1}  $  has the matrix
	$\left\lbrack
	\begin{array}{cc}
	\mathrm{P}_{1} &0 \\
	0 & \tilde{\mathrm{P}_{4}} \\
	\end{array}
	\right \rbrack
	$   
	w.r.t. the decomposiotion
	$$H_{\mathcal{A}} = \mathrm{U}(M) \tilde \oplus \mathrm{U}(N)\stackrel{\mathrm{P}}{\longrightarrow}   \mathrm{P}(M) \tilde \oplus (\tilde{N} \tilde{\oplus} \mathrm{N}(\mathrm{P}))= H_{\mathcal{A}} ,  $$ 
	where $\mathrm{U}$ has the matrix
	$\left\lbrack
	\begin{array}{cc}
	1&-\mathrm{P}_{1}^{-1}\mathrm{P}_{2} \\
	0 & 1 \\
	\end{array}
	\right \rbrack
	$   
	w.r.t. the decomposition\\
	$M\tilde{\oplus} N\stackrel{\mathrm{U}}{\longrightarrow} M\tilde{\oplus} N,$  so that $\mathrm{U}$ is an isomorphism. Since  $\mathrm{P} \in \mathcal{M} \Phi (H_{\mathcal{A}})   $  and  $  \mathrm{U}(N)  $  is finitely generated, by \cite[Lemma 2.16 ]{I} it follows that $ \tilde{N} \tilde{\oplus} \mathrm{N}(\mathrm{P})  $  is finitely generated. Hence  $ \tilde{N}  $  is finitely generated. Now,  $ \mathrm{P}\mathrm{F}_{{\mid}_{\mathrm{P}(M)}}  $  is an isomorhism from $\mathrm{P}(M)$ onto  $ M^{\prime}  $ . Since $\mathrm{P}(M)$ is also orthogonally complementable in  $ H_{\mathcal{A}}  $  (because $\mathrm{P}_{{\mid}_{M}} \in  B^{a}(M,H_{\mathcal{A}})    ,$ as $M$ is orthogonally complementable, $ \mathrm{P}$ is adjointable   and $\mathrm{P}(M)$ is closed), it follows again that $\mathrm{P}\mathrm{F}_{{\mid}_{\mathrm{P}(M)}}   $  can be viewed as an adjointable operator from $\mathrm{P}(M)$ into $\mathrm{R}(\mathrm{P})$, so $M^{\prime}$ is orthogonally complementable in $\mathrm{R}(\mathrm{P})$ by \cite[Theorem 2.3.3 ]{MT} (since  $M^{\prime}=\mathrm{R}(\mathrm{P}\mathrm{F}_{{\mid}_{\mathrm{P}(M)}})   .$ Thus $M^{\prime}  \tilde{\oplus} \tilde{N}^{\prime}=\mathrm{R}(\mathrm{P}) $ for some closed submodule  $\tilde{N}^{\prime}  .$ Now,
	$$  H_{\mathcal{A}} = M^{\prime} \tilde \oplus N^{\prime} = M^{\prime} \tilde \oplus  \tilde{N}^{\prime}   \tilde \oplus \mathrm{N}(\mathrm{P}),$$  
	so it follows that  $(\tilde{N}^{\prime}   \tilde \oplus \mathrm{N}(\mathrm{P})) \cong N^{\prime}   .$ Since  $N^{\prime}   $  is finitely generated, it follows that  $ \tilde{N}  $  is finitely generated also. With respect to the decomposition
	$$ \mathrm{R}(\mathrm{P}) = \mathrm{P}(M) \oplus \tilde{N}\stackrel{\mathrm{P}\mathrm{F}}{\longrightarrow}  M^{\prime}  \oplus \tilde{N}^{\prime}= \mathrm{R}(\mathrm{P}) ,$$  
	${\mathrm{P}\mathrm{F}_{{\mid}_{\mathrm{R}(\mathrm{P})}}} $ has the matrix
	$\left\lbrack
	\begin{array}{cc}
	(\mathrm{P}\mathrm{F})_{1}&(\mathrm{P}\mathrm{F})_{2} \\
	0 &(\mathrm{P}\mathrm{F})_{4} \\
	\end{array}
	\right \rbrack
	,$    
	where  $(\mathrm{P}\mathrm{F})_{1}$  is an isomorphism. Then  $ {\mathrm{P}\mathrm{F}_{{\mid}_{\mathrm{R}(\mathrm{P})}}}  $  has the matrix
	$\left\lbrack
	\begin{array}{cc}
	(\mathrm{P}\mathrm{F})_{1}&0 \\
	0 & (\mathrm{P}\mathrm{F})_{4} \\
	\end{array}
	\right \rbrack
	$     
	w.r.t. the decomposition
	$$ \mathrm{R}(\mathrm{P}) = \tilde{\mathrm{U}}(\mathrm{P}(M)) \tilde{\oplus} \tilde{\mathrm{U}}(\tilde{N})\stackrel{\mathrm{P}\mathrm{F}}{\longrightarrow}  M^{\prime}  \oplus \tilde{N}^{\prime}= \mathrm{R}(\mathrm{P}) ,$$   
	where  $ \tilde{\mathrm{U}}  $ is an isomorphism of $\mathrm{R}(\mathrm{P})$ onto $\mathrm{R}(\mathrm{P}).$  Since  $ \tilde{N}, \tilde{N}^{\prime}  $  and thus also  $\tilde{\mathrm{U}}(\tilde{N})   $  are finitely generated, it follows that  $\mathrm{P}\mathrm{F}_{{\mid}_{\mathrm{R}(\mathrm{P})}} \in \mathcal{M} \Phi(\mathrm{R}(\mathrm{P}))   .$\\
	Conversely, suppose that  $ \mathrm{P}\mathrm{F}_{{\mid}_{\mathrm{R}(\mathrm{P})}} \in \mathcal{M} \Phi(\mathrm{R}(\mathrm{P}))  .$\\
	Let
	$$\mathrm{R}(\mathrm{P}) = M \tilde \oplus N\stackrel{\mathrm{P}\mathrm{F}}{\longrightarrow}   M^{\prime} \tilde \oplus N^{\prime}= \mathrm{R}(\mathrm{P})  $$
	be a decomposition w.r.t. which  $\mathrm{P}\mathrm{F}_{{\mid}_{\mathrm{R}(\mathrm{P})}}   $  has the matrix
	$\left\lbrack
	\begin{array}{cc}
	(\mathrm{P}\mathrm{F})_{1}&0 \\
	0 & (\mathrm{P}\mathrm{F})_{4} \\
	\end{array}
	\right \rbrack
	,$   
	where  $ N, N^{\prime}   $  are finitely generated and $(\mathrm{P}\mathrm{F})_{1}$, is an isomorphism. It follows that w.r.t. the decomposition
	$$  H_{\mathcal{A}} = M \tilde \oplus (N \tilde{\oplus} \mathrm{N}(\mathrm{P}))\stackrel{\mathrm{F}}{\longrightarrow} M^{\prime} \tilde \oplus (N^{\prime} \tilde{\oplus} \mathrm{N}(\mathrm{P}))= H_{\mathcal{A}} ,$$
	$ \mathrm{F} $ has the matrix
	$\left\lbrack
	\begin{array}{cc}
	\mathrm{F}_{1} & \mathrm{F}_{2} \\
	\mathrm{F}_{3} & \mathrm{F}_{4} \\
	\end{array}
	\right \rbrack
	,$  
	where  $ \mathrm{F}_{1}  $  is an isomorphism as  $ \mathrm{F}_{1}=(\mathrm{P}\mathrm{F})_{1}  $ . Indeed,  $\mathrm{F}_{1}=\sqcap_{M^{\prime}} \mathrm{F}_{{\mid}_{M}}   ,$  where  $ \sqcap_{M^{\prime}}   $  denotes the projection onto  $M^{\prime}   $  along  $ N^{\prime} \tilde{\oplus} \mathrm{N}(\mathrm{P})  .$ But then, since $\mathrm{P}\mathrm{F}$ maps $M$ isomorphically onto  $M^{\prime}   $  and so $\mathrm{R}(\mathrm{P}) = M^{\prime} \tilde{\oplus}  N^{\prime} $ , it follows that  $\mathrm{P}\mathrm{F}_{{\mid}_{M}}=\sqcap_{M^{\prime}} \mathrm{F}_{{\mid}_{M}}    $ . Hence $\mathrm{F}_{1}= \sqcap_{M^{\prime}} \mathrm{F}_{{\mid}_{M}} =\mathrm{P}\mathrm{F}_{{\mid}_{M}}  $  is an isomorphism from $M$ onto  $ M^{\prime}  .$ Using the techniques of diagonalization from the proof of \\
	\cite[Lemma 2.7.10]{MT} and the fact that  $ N \tilde{\oplus} \mathrm{N}(\mathrm{P})  $  and  $  N^{\prime} \tilde{\oplus} \mathrm{N}(\mathrm{P}) $  are finitely generated, one deduces that  $ \mathrm{F} \in \mathcal{M} \Phi (H_{\mathcal{A}})  .$ 	
\end{proof}
\begin{flushright}
	$\boxdot$
\end{flushright}
\begin{corollary} \label{c420} 
	Let  $\mathrm{F},\mathrm{P} \in B^{a}(H_{\mathcal{A}})   $  and suppose that $\mathrm{P}$ is a projection such that  $ \mathrm{N}(\mathrm{P})   $  is finitely generated. Then  
	$\sigma_{e}^{\mathcal{A}}(\mathrm{F})=\sigma_{e}^{\mathcal{A}} (\mathrm{P}\mathrm{F}_{{\mid}_{\mathrm{P}(M)}})  $ where
	$$\sigma_{e}^{\mathcal{A}} (\mathrm{P}\mathrm{F}_{{\mid}_{\mathrm{R}(\mathrm{P})}})= \lbrace \alpha \in Z(\mathcal{A}) \mid (\mathrm{P}\mathrm{F}-\alpha \mathrm{I} )_{{\mid}_{\mathrm{R}(\mathrm{P})}} \notin \mathcal{M} \Phi(\mathrm{R}(\mathrm{P})) \rbrace .$$ 
\end{corollary}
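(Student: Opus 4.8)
The plan is to reduce the statement to Lemma \ref{l410}, applied for each fixed $\alpha\in Z(\mathcal{A})$ to the operator $\mathrm{F}-\alpha\mathrm{I}$ in place of $\mathrm{F}$; here $\sigma_{e}^{\mathcal{A}}(\mathrm{F})=\lbrace\alpha\in Z(\mathcal{A})\mid \mathrm{F}-\alpha\mathrm{I}\notin\mathcal{M}\Phi(H_{\mathcal{A}})\rbrace$, in parallel with the displayed definition of $\sigma_{e}^{\mathcal{A}}(\mathrm{P}\mathrm{F}_{{\mid}_{\mathrm{R}(\mathrm{P})}})$.

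First I would observe that, since $\alpha\in Z(\mathcal{A})$, the operator $\alpha\mathrm{I}$ belongs to $B^{a}(H_{\mathcal{A}})$, so $\mathrm{F}-\alpha\mathrm{I}\in B^{a}(H_{\mathcal{A}})$, while $\mathrm{P}$ and the hypothesis that $\mathrm{N}(\mathrm{P})$ is finitely generated are unaffected. Lemma \ref{l410} applied to $\mathrm{F}-\alpha\mathrm{I}$ therefore gives
$$\mathrm{F}-\alpha\mathrm{I}\in\mathcal{M}\Phi(H_{\mathcal{A}})\ \Longleftrightarrow\ \mathrm{P}(\mathrm{F}-\alpha\mathrm{I})_{{\mid}_{\mathrm{R}(\mathrm{P})}}\in\mathcal{M}\Phi(\mathrm{R}(\mathrm{P})).$$

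Next I would identify $\mathrm{P}(\mathrm{F}-\alpha\mathrm{I})_{{\mid}_{\mathrm{R}(\mathrm{P})}}$ with $(\mathrm{P}\mathrm{F}-\alpha\mathrm{I})_{{\mid}_{\mathrm{R}(\mathrm{P})}}$ as operators on $\mathrm{R}(\mathrm{P})$. Since $\mathrm{R}(\mathrm{P})$ is a closed $\mathcal{A}$-submodule of $H_{\mathcal{A}}$, it is invariant under right multiplication by $\alpha$, i.e. $(\alpha\mathrm{I})(x)=x\alpha\in\mathrm{R}(\mathrm{P})$ for $x\in\mathrm{R}(\mathrm{P})$; as $\mathrm{P}$ acts as the identity on its range, $\mathrm{P}(\alpha\mathrm{I})(x)=x\alpha=(\alpha\mathrm{I})(x)$. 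Hence on $\mathrm{R}(\mathrm{P})$
$$\mathrm{P}(\mathrm{F}-\alpha\mathrm{I})_{{\mid}_{\mathrm{R}(\mathrm{P})}}=\mathrm{P}\mathrm{F}_{{\mid}_{\mathrm{R}(\mathrm{P})}}-(\alpha\mathrm{I})_{{\mid}_{\mathrm{R}(\mathrm{P})}}=(\mathrm{P}\mathrm{F}-\alpha\mathrm{I})_{{\mid}_{\mathrm{R}(\mathrm{P})}},$$
and in particular this operator maps $\mathrm{R}(\mathrm{P})$ into itself, so membership in $\mathcal{M}\Phi(\mathrm{R}(\mathrm{P}))$ is meaningful.

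Combining the two displays, $\alpha\notin\sigma_{e}^{\mathcal{A}}(\mathrm{F})$ if and only if $\alpha\notin\sigma_{e}^{\mathcal{A}}(\mathrm{P}\mathrm{F}_{{\mid}_{\mathrm{R}(\mathrm{P})}})$; since $\alpha\in Z(\mathcal{A})$ was arbitrary, taking complements in $Z(\mathcal{A})$ yields the claimed equality. I do not expect a genuine obstacle here: the corollary is essentially the pointwise-in-$\alpha$ restatement of Lemma \ref{l410}, and the only point requiring care is the elementary identity $\mathrm{P}(\alpha\mathrm{I})_{{\mid}_{\mathrm{R}(\mathrm{P})}}=(\alpha\mathrm{I})_{{\mid}_{\mathrm{R}(\mathrm{P})}}$, which rests on $\mathrm{R}(\mathrm{P})$ being an $\mathcal{A}$-submodule (hence invariant under right multiplication by $\alpha$) together with $\mathrm{P}$ being a projection onto it.
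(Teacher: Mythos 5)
Your proposal is correct and coincides with the paper's (implicit) argument: the corollary is stated without a separate proof precisely because it is the pointwise-in-$\alpha$ application of Lemma \ref{l410} to $\mathrm{F}-\alpha\mathrm{I}$, combined with the identity $\mathrm{P}(\mathrm{F}-\alpha\mathrm{I})_{{\mid}_{\mathrm{R}(\mathrm{P})}}=(\mathrm{P}\mathrm{F}-\alpha\mathrm{I})_{{\mid}_{\mathrm{R}(\mathrm{P})}}$, which you justify correctly from the $\mathcal{A}$-linearity of $\mathrm{P}$ and $\mathrm{P}$ acting as the identity on its range. Nothing further is needed.
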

Let now $ \tilde{\mathcal{M} \Phi}_{0} (H_{\mathcal{A}})$ be the set of all $ \mathrm{F} \in B^{a}(H_{\mathcal{A}}) $ such that there exists a decomposition  
$$H_{\mathcal{A}} = M_{1} \tilde \oplus {N_{1}}\stackrel{\mathrm{F}}{\longrightarrow}  M_{2} \tilde \oplus N_{2}= H_{\mathcal{A}}   $$  
w.r.t. which $\mathrm{F}$ has the matrix
$\left\lbrack
\begin{array}{cc}
\mathrm{F}_{1} & 0 \\
0 & \mathrm{F}_{4} \\
\end{array}
\right \rbrack
$,  
where  $ \mathrm{F}_{1}  $  is an isomorphism,  $ N_{1},N_{2}  $  are finitely generated and   
$$ N \tilde \oplus {N_{1}} = N \tilde \oplus N_{2}= H_{\mathcal{A}}   $$ 
for some closed submodule  $ N \subseteq H_{\mathcal{A}}.$\\
Notice that this implies that  $\mathrm{F} \in \mathcal{M} \Phi (H_{\mathcal{A}})   $  and  $ N_{1} \cong N_{2}  $ , so that index  $\mathrm{F}= [N_{1}]-[N_{2} ]=0   .$ Hence  $\tilde{\mathcal{M}} \Phi_{0}(H_{\mathcal{A}})   \subseteq  {\mathcal{M}} \Phi_{0}(H_{\mathcal{A}})   .$ \\
Let  $\mathrm{P}(H_{\mathcal{A}})=\lbrace \mathrm{P} \in B(H_{\mathcal{A}}) \mid     $  $\mathrm{P}$ is a projection and  $ \mathrm{N}(\mathrm{P})  $  is finitely generated$\rbrace$ \\
and let
$$\sigma_{e\mathrm{W}}^{\mathcal{A}} (\mathrm{F})= \lbrace \alpha \in Z(\mathcal{A}) \mid (\mathrm{F}-\alpha \mathrm{I} ) \notin \tilde {\mathcal{M}} \Phi_{0}(H_{\mathcal{A}}) \rbrace $$  
for  $ \mathrm{F}\in B^{a}(H_{\mathcal{A}})   $ . Then we have the following theorem.
\begin{theorem} \label{t430} 
	Let  $ \mathrm{F}\in B^{a}(H_{\mathcal{A}})   $ . Then
	$$\sigma_{e\mathrm{W}}^{\mathcal{A}} (\mathrm{F})=\cap \lbrace  \sigma^{\mathcal{A}}  (\mathrm{P}\mathrm{F}_{{\mid}_{\mathrm{R}(\mathrm{P})}}) \mid  \mathrm{P}\in \mathrm{P}(H_{\mathcal{A}}) \rbrace$$  
	where\\
	$ \sigma^{\mathcal{A}}  (\mathrm{P}\mathrm{F}_{{\mid}_{\mathrm{R}(\mathrm{P})}}) =\lbrace  \alpha \in Z(\mathcal{A}) \mid (\mathrm{P}\mathrm{F}-\alpha \mathrm{I} )_{{\mid}_{\mathrm{R}(\mathrm{P})}}     $
	is not invertible in  $ B(\mathrm{R}(\mathrm{P})) \rbrace .$
\end{theorem}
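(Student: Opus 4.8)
The plan is to fix $\alpha\in Z(\mathcal{A})$ and reduce the claimed set equality to a single equivalence about the operator $\mathrm{G}:=\mathrm{F}-\alpha\mathrm{I}\in B^{a}(H_{\mathcal{A}})$. Since $\alpha\mathrm{I}$ commutes with every $\mathcal{A}$-linear operator and acts on the submodule $\mathrm{R}(\mathrm{P})$ as multiplication by $\alpha$, we have $(\mathrm{P}\mathrm{F}-\alpha\mathrm{I})|_{\mathrm{R}(\mathrm{P})}=\mathrm{P}(\mathrm{F}-\alpha\mathrm{I})|_{\mathrm{R}(\mathrm{P})}=\mathrm{P}\mathrm{G}|_{\mathrm{R}(\mathrm{P})}$ for every $\mathrm{P}\in\mathrm{P}(H_{\mathcal{A}})$. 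Hence $\alpha\notin\sigma_{e\mathrm{W}}^{\mathcal{A}}(\mathrm{F})$ means precisely $\mathrm{G}\in\tilde{\mathcal{M}}\Phi_{0}(H_{\mathcal{A}})$, while $\alpha\notin\cap\{\sigma^{\mathcal{A}}(\mathrm{P}\mathrm{F}|_{\mathrm{R}(\mathrm{P})})\mid\mathrm{P}\in\mathrm{P}(H_{\mathcal{A}})\}$ means $\mathrm{P}\mathrm{G}|_{\mathrm{R}(\mathrm{P})}$ is invertible in $B(\mathrm{R}(\mathrm{P}))$ for some $\mathrm{P}\in\mathrm{P}(H_{\mathcal{A}})$. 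So it suffices to show: $\mathrm{G}\in\tilde{\mathcal{M}}\Phi_{0}(H_{\mathcal{A}})$ if and only if there is a $\mathrm{P}\in\mathrm{P}(H_{\mathcal{A}})$ such that $\mathrm{P}\mathrm{G}|_{\mathrm{R}(\mathrm{P})}$ is an isomorphism of $\mathrm{R}(\mathrm{P})$.

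For the ``only if'' direction, take a decomposition $H_{\mathcal{A}}=M_{1}\tilde{\oplus}N_{1}\stackrel{\mathrm{G}}{\longrightarrow}M_{2}\tilde{\oplus}N_{2}=H_{\mathcal{A}}$ witnessing $\mathrm{G}\in\tilde{\mathcal{M}}\Phi_{0}(H_{\mathcal{A}})$, so $\mathrm{G}$ is diagonal with $\mathrm{G}_{1}: M_{1}\to M_{2}$ an isomorphism, $N_{1},N_{2}$ finitely generated, and $N\tilde{\oplus}N_{1}=N\tilde{\oplus}N_{2}=H_{\mathcal{A}}$ for some closed $N$. Let $\mathrm{P}$ be the (bounded, $\mathcal{A}$-linear) projection onto $N$ along $N_{2}$; then $\mathrm{R}(\mathrm{P})=N$ and $\mathrm{N}(\mathrm{P})=N_{2}$ is finitely generated, so $\mathrm{P}\in\mathrm{P}(H_{\mathcal{A}})$. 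Writing $\pi_{i}$ for the projection onto $M_{i}$ along $N_{i}$, the maps $\pi_{1}|_{N}: N\to M_{1}$ and $\pi_{2}|_{N}: N\to M_{2}$ are isomorphisms (injective since $N\cap N_{i}=0$, surjective since $N+N_{i}=H_{\mathcal{A}}$). A direct computation from the diagonal form of $\mathrm{G}$ gives $\pi_{2}\mathrm{G}|_{N}=\mathrm{G}_{1}\,\pi_{1}|_{N}$ and $(\pi_{2}|_{N})^{-1}\pi_{2}=\mathrm{P}$, whence $\mathrm{P}\mathrm{G}|_{N}=(\pi_{2}|_{N})^{-1}\,\mathrm{G}_{1}\,(\pi_{1}|_{N})$ is a composition of three isomorphisms, so $\mathrm{P}\mathrm{G}|_{\mathrm{R}(\mathrm{P})}$ is an isomorphism of $\mathrm{R}(\mathrm{P})$.

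For the ``if'' direction, put $M:=\mathrm{R}(\mathrm{P})$ and $N_{1}:=\mathrm{N}(\mathrm{P})$, so $H_{\mathcal{A}}=M\tilde{\oplus}N_{1}$ with $N_{1}$ finitely generated and $\mathrm{P}\mathrm{G}|_{M}$ an isomorphism of $M$. Because $\mathrm{P}$ is bounded, $\mathrm{G}|_{M}$ is bounded below, so $\mathrm{G}(M)$ is closed; and injectivity and surjectivity of $\mathrm{P}\mathrm{G}|_{M}$ force $\mathrm{G}(M)\cap N_{1}=0$ and $\mathrm{G}(M)+N_{1}=H_{\mathcal{A}}$, i.e. $H_{\mathcal{A}}=\mathrm{G}(M)\tilde{\oplus}N_{1}$. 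With respect to $H_{\mathcal{A}}=M\tilde{\oplus}N_{1}\stackrel{\mathrm{G}}{\longrightarrow}\mathrm{G}(M)\tilde{\oplus}N_{1}=H_{\mathcal{A}}$ the operator $\mathrm{G}$ has an upper triangular matrix $\left(\begin{array}{cc}\mathrm{G}_{1}&\mathrm{G}_{2}\\0&\mathrm{G}_{4}\end{array}\right)$ with $\mathrm{G}_{1}=\mathrm{G}|_{M}: M\to\mathrm{G}(M)$ an isomorphism (the lower left block is $0$ since $\mathrm{G}(M)\subseteq\mathrm{G}(M)$). A standard diagonalization by the column operation $\mathrm{G}\mapsto\mathrm{G}\mathrm{V}$, with $\mathrm{V}=\left(\begin{array}{cc}1&-\mathrm{G}_{1}^{-1}\mathrm{G}_{2}\\0&1\end{array}\right)$ an isomorphism of $M\tilde{\oplus}N_{1}$ fixing $M$ (carried out as in the proof of Lemma~\ref{l410}; cf.\ \cite[Lemma 2.7.10]{MT}), yields a decomposition $H_{\mathcal{A}}=M\tilde{\oplus}\widetilde{N}_{1}\stackrel{\mathrm{G}}{\longrightarrow}\mathrm{G}(M)\tilde{\oplus}N_{1}=H_{\mathcal{A}}$ with respect to which $\mathrm{G}$ is diagonal, where $\widetilde{N}_{1}:=\mathrm{V}(N_{1})\cong N_{1}$ is again finitely generated. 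Finally $N:=M$ is a common complement: $M\tilde{\oplus}\widetilde{N}_{1}=\mathrm{V}(M\tilde{\oplus}N_{1})=H_{\mathcal{A}}$ and $M\tilde{\oplus}N_{1}=H_{\mathcal{A}}$. Hence $\mathrm{G}\in\tilde{\mathcal{M}}\Phi_{0}(H_{\mathcal{A}})$, which proves the equivalence and therefore the theorem.

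I expect the main difficulty to lie in the ``if'' direction, and specifically in the final bookkeeping: the diagonalization changes the domain complement from $N_{1}$ to $\widetilde{N}_{1}$, and one must still produce a single closed submodule complementing both the domain and the target finitely generated pieces simultaneously — the resolution being that $M=\mathrm{R}(\mathrm{P})$ itself serves, so no genuine obstruction arises. Beyond that, the proof requires only careful attention to the boundedness of the (possibly skew) projections involved and to the commutation of $\alpha\mathrm{I}$ with $\mathcal{A}$-linear operators, the latter being precisely what makes the reduction to $\mathrm{G}=\mathrm{F}-\alpha\mathrm{I}$ legitimate.
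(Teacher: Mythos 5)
Your proof is correct and follows essentially the same route as the paper: in both directions you use the same key objects, namely the skew projection onto the common complement $N$ along $N_{2}$ (with $\mathrm{N}(\mathrm{P})=N_{2}$ finitely generated) for one inclusion, and the decomposition $H_{\mathcal{A}}=\mathrm{R}(\mathrm{P})\tilde{\oplus}\mathrm{N}(\mathrm{P})$ together with the diagonalization of \cite[Lemma 2.7.10]{MT} and the choice $N=\mathrm{R}(\mathrm{P})$ for the other. The only differences are cosmetic streamlinings (writing $\mathrm{P}\mathrm{G}|_{N}=(\pi_{2}|_{N})^{-1}\mathrm{G}_{1}\pi_{1}|_{N}$ instead of a second diagonalization, and identifying the codomain decomposition $\mathrm{G}(M)\tilde{\oplus}\mathrm{N}(\mathrm{P})$ directly), which do not change the argument.
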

\begin{proof}
	Let  $\alpha \notin \cap \lbrace \sigma^{\mathcal{A}}  (\mathrm{P}\mathrm{F}_{{\mid}_{\mathrm{R}(\mathrm{P})}}) \mid   \mathrm{P}\in \mathrm{P}(H_{\mathcal{A}}) \rbrace  $ . Then there exists some\\
	$ \mathrm{P}\in \mathrm{P}(H_{\mathcal{A}})  $  such that  $(\mathrm{P}\mathrm{F}-\alpha \mathrm{I})_{{\mid}_{\mathrm{R}(\mathrm{P})}}   $  is invertible in  $ B(\mathrm{R}(\mathrm{P}))  $ . Hence\\
	$(\mathrm{P}\mathrm{F}-\alpha \mathrm{I})_{{\mid}_{\mathrm{R}(\mathrm{P})}}    $  is an isomorphism from $\mathrm{R}(\mathrm{P})$ onto $\mathrm{R}(\mathrm{P})$, so w.r.t. the decomposition  
	$$H_{\mathcal{A}} = \mathrm{R}(\mathrm{P}) \tilde \oplus \mathrm{N}(\mathrm{P})\stackrel{\mathrm{F}-\alpha \mathrm{I}}{\longrightarrow}   \mathrm{R}(\mathrm{P}) \tilde \oplus \mathrm{N}(\mathrm{P})= H_{\mathcal{A}},$$  
	$\mathrm{F}-\alpha \mathrm{I}$ has the matrix
	$\left\lbrack
	\begin{array}{cc}
	(\mathrm{F}-\alpha \mathrm{I})_{1} & (\mathrm{F}-\alpha \mathrm{I})_{2} \\
	(\mathrm{F}-\alpha \mathrm{I})_{3} & (\mathrm{F}-\alpha \mathrm{I})_{4} \\
	\end{array}
	\right \rbrack
	,$
	where  \\
	$ (\mathrm{F}-\alpha \mathrm{I})_{1} =$ 
	$(\mathrm{P}\mathrm{F}-\alpha \mathrm{I})_{{\mid}_{\mathrm{R}(\mathrm{P})}} $  
	is an isomorphism. Then, w.r.t. the decomposition
	$$H_{\mathcal{A}} =\mathrm{U}( \mathrm{R}(\mathrm{P})) \tilde \oplus \mathrm{U}(\mathrm{N}(\mathrm{P}))\stackrel{\mathrm{F}-\alpha \mathrm{I}}{\longrightarrow}   \mathrm{V}^{-1}(\mathrm{R}(\mathrm{P})) \tilde \oplus \mathrm{V}^{-1}(\mathrm{N}(\mathrm{P}))= H_{\mathcal{A}} ,$$  
	$\mathrm{F}-\alpha \mathrm{I}$ has the matrix
	$\left\lbrack
	\begin{array}{cc}
	\overbrace{(\mathrm{F}-\alpha \mathrm{I})_{1}} & 0 \\
	0 & \overbrace{(\mathrm{F}-\alpha \mathrm{I})_{4}} \\
	\end{array}
	\right \rbrack
	,$
	where
	\begin{center}
		$\mathrm{U}$ has the matrix $\left\lbrack
		\begin{array}{cc}
		1 & -(\mathrm{F}-\alpha \mathrm{I})_{1}^{-1}(\mathrm{F}-\alpha \mathrm{I})_{2} \\
		0 & 1 \\
		\end{array}
		\right \rbrack
		,$
		w.r.t. the decomposition
		$\mathrm{R}(\mathrm{P}) \tilde \oplus \mathrm{N}(\mathrm{P})\stackrel{\mathrm{U}}{\longrightarrow}  \mathrm{R}(\mathrm{P}) \tilde \oplus \mathrm{N}(\mathrm{P}),$
	\end{center} 
	\begin{center}
		$\mathrm{V}$ has the matrix $\left\lbrack
		\begin{array}{cc}
		1 &  0\\
		-(\mathrm{F}-\alpha \mathrm{I})_{3}(\mathrm{F}-\alpha \mathrm{I})_{1}^{-1} & 1 \\
		\end{array}
		\right \rbrack
		$, w.r.t. the decomposition 
		$\mathrm{R}(\mathrm{P}) \tilde \oplus \mathrm{N}(\mathrm{P})\stackrel{\mathrm{V}}{\longrightarrow}  \mathrm{R}(\mathrm{P}) \tilde \oplus \mathrm{N}(\mathrm{P}),$
	\end{center} 
	so $\mathrm{U},\mathrm{V}$ are isomorphisms and  ${\overbrace{(\mathrm{F}-\alpha \mathrm{I})}}_{1}$  is an isomorphism.\\
	Notice that  $\mathrm{U}( \mathrm{R}(\mathrm{P})) =   \mathrm{R}(\mathrm{P}), \mathrm{V}^{-1}(\mathrm{N}(\mathrm{P}))=\mathrm{N}(\mathrm{P})  $ . Set  $M_{1}=\mathrm{R}(\mathrm{P}),$ \\
	$N_{1}=\mathrm{U}(\mathrm{N}(\mathrm{P})),M_{2}=\mathrm{V}^{-1}(\mathrm{R}(\mathrm{P})), N_{2}=\mathrm{N}(\mathrm{P})$  and  $N=\mathrm{R}(\mathrm{P}).$ It follows that  $(\mathrm{F}-\alpha \mathrm{I}) \in \tilde{\mathcal{M}} \Phi_{0}(H_{\mathcal{A}})   ,$ so  $ \alpha \notin \sigma_{e\mathrm{W}}^{\mathcal{A}} (\mathrm{F})   $. \\
	Conversely, suppose that  $ \alpha \notin \sigma_{e\mathrm{W}}^{\mathcal{A}} (\mathrm{F})   $ . Then, by definition of  $\sigma_{e\mathrm{W}}^{\mathcal{A}} (\mathrm{F})   $  and  \\
	$ \tilde{\mathcal{M}} \Phi_{0}(H_{\mathcal{A}}) ,$  there exists a decomposition
	$$H_{\mathcal{A}} = M_{1} \tilde \oplus {N_{1}}\stackrel{\mathrm{F}-\alpha \mathrm{I}}{\longrightarrow}  M_{2} \tilde \oplus N_{2}= H_{\mathcal{A}}  $$
	w.r.t. which  $\mathrm{F}-\alpha \mathrm{I}   $  has the matrix
	$\left\lbrack
	\begin{array}{cc}
	(\mathrm{F}-\alpha \mathrm{I})_{1} & 0 \\
	0 & (\mathrm{F}-\alpha \mathrm{I})_{4} \\
	\end{array}
	\right \rbrack
	,$ 
	where  $ (\mathrm{F}-\alpha \mathrm{I})_{1}  $  is an isomorphism,  $ N_{1} , N_{2} $  are finitely generated and  $N \tilde{\oplus} N_{1} =N \tilde{\oplus} N_{2} = H_{\mathcal{A}}    $  for some closed submodule $N .$\\
	Let  $\sqcap_{M_{1}}, \sqcap_{M_{2}}  $  denote the projections onto  $M_{1}   $  along  $ N_{1}  $  and onto  $M_{2}   $  along  $ N_{2}  $  respectively. Since  $\mathrm{F}-\alpha \mathrm{I} $  has the matrix
	$\left\lbrack
	\begin{array}{cc}
	(\mathrm{F}-\alpha \mathrm{I})_{1} & 0 \\
	0 & (\mathrm{F}-\alpha \mathrm{I})_{4} \\
	\end{array}
	\right \rbrack
	$   
	w.r.t. the decomposition  
	$$H_{\mathcal{A}} = M_{1} \tilde \oplus {N_{1}}\stackrel{\mathrm{F}-\alpha \mathrm{I}}{\longrightarrow}  M_{2} \tilde \oplus N_{2}= H_{\mathcal{A}}  ,$$ it follows that  
	$$ \sqcap_{M_{2}}(\mathrm{F}-\alpha \mathrm{I})_{\mid_{N}}=(\mathrm{F}-\alpha \mathrm{I}) \sqcap_{\mid_{{M_{1}}_{\mid_{N}}} }  .$$ 
	As
	$H_{\mathcal{A}} = N \tilde \oplus {N_{1}}= M_{1} \tilde \oplus N_{1},$ 
	it follows that  $\sqcap_{\mid_{{M_{1}}_{\mid_{N}}} }    $  is an isomorphism from  $ N  $  onto  $ M_{1}  $ . Using this together with the fact that  $ (\mathrm{F}-\alpha \mathrm{I})_{\mid_{M_{1}}}  $ is an isomorphism from  $M_{1}   ,$  onto  $M_{2},$  one gets that
	$$ \sqcap_{M_{2}}(\mathrm{F}-\alpha \mathrm{I})_{\mid_{N}}=(\mathrm{F}-\alpha \mathrm{I}) \sqcap_{\mid_{{M_{1}}_{\mid_{N}}} }  $$  
	is an isomorphism from  $ N  $  onto  $  M_{2} $ . Therefore w.r.t. the decomposition \\
	$H_{\mathcal{A}} = N \tilde \oplus {N_{1}}\stackrel{\mathrm{F}-\alpha \mathrm{I}}{\longrightarrow}  M_{2} \tilde \oplus N_{2}= H_{\mathcal{A}}, \mathrm{F}-\alpha \mathrm{I}   $  has the matrix
	$\left\lbrack
	\begin{array}{cc}
	(\mathrm{F}-\alpha \mathrm{I})_{1} & 0 \\
	(\mathrm{F}-\alpha \mathrm{I})_{2} & (\mathrm{F}-\alpha \mathrm{I})_{4} \\
	\end{array}
	\right \rbrack
	,$    
	where  $ (\mathrm{F}-\alpha \mathrm{I})_{1}  $  is an isomorphism  (as  $(\mathrm{F}-\alpha \mathrm{I})_{1}=\sqcap_{M_{2}}(\mathrm{F}-\alpha \mathrm{I})_{\mid_{N}}   ).$ Hence  $\mathrm{F}-\alpha \mathrm{I}   $  has the matrix
	$\left\lbrack
	\begin{array}{cc}
	\overbrace{(\mathrm{F}-\alpha \mathrm{I})_{1}} & 0 \\
	0 & \overbrace{(\mathrm{F}-\alpha \mathrm{I})_{4}} \\
	\end{array}
	\right \rbrack
	$     
	w.r.t. the decomposition\\
	$H_{\mathcal{A}} = N \tilde \oplus {N_{1}}\stackrel{\mathrm{F}-\alpha \mathrm{I}}{\longrightarrow}  \mathrm{V}^{-1}(M_{2}) \tilde \oplus N_{2}= H_{\mathcal{A}},$ where 	$\mathrm{V}$ has the matrix
	\begin{center}
		$\left\lbrack
		\begin{array}{cc}
		1 & 0 \\
		-(\mathrm{F}-\alpha \mathrm{I})_{3}(\mathrm{F}-\alpha \mathrm{I})_{1}^{-1} & 1 \\
		\end{array}
		\right \rbrack
		,$ w.r.t. the decomposition 
		
	\end{center} 
	$  M_{2} \tilde \oplus N_{2}\stackrel{\mathrm{V}}{\longrightarrow} M_{2} \tilde \oplus N_{2},$    
	so that $\mathrm{V}$ and  ${\overbrace{(\mathrm{F}-\alpha \mathrm{I})}}_{1}   $  are isomorphisms. It follows that  $ (\mathrm{F}-\alpha \mathrm{I})_{\mid_{N}}  $ is an isomorphism from  $ N $  onto  $ \mathrm{V}^{-1}(M_{2})  .$ Next, since  $$H_{\mathcal{A}}=N \tilde \oplus N_{2}=\mathrm{V}^{-1}(M_{2}) \tilde \oplus N_{2}  ,$$ it follows that  $\mathrm{P}_{\mid_{{\mathrm{V}^{-1}(M_{2})}}}   $  is an isomorphism from  $ \mathrm{V}^{-1}(M_{2})  $  onto  $  N ,$  where  $ \mathrm{P}  $  denotes the projection onto  $ N  $  along  $ N_{2}  $ . Hence  $ \mathrm{P}(\mathrm{F}-\alpha \mathrm{I})_{\mid_{N}}  $  is an isomorphism from  $ N  $  onto  $ N, $  so 
	\begin{center} 
		$ \alpha \notin \cap \lbrace  \sigma^{\mathcal{A}}  (\mathrm{P}\mathrm{F}_{{\mid}_{\mathrm{R}(\mathrm{P})}}) \mid  \mathrm{P} \in \mathrm{P}(H_{\mathcal{A}}) $  and $\mathrm{N}(\mathrm{P})$ is finitely generated$ \rbrace.$
	\end{center}		
\end{proof}
\begin{flushright}
	$\boxdot$
\end{flushright}
\begin{lemma} \label{l440} 
	$ \tilde{\mathcal{M} \Phi}_{0}(H_{\mathcal{A}})   $  is open in $B^{a}(H_{\mathcal{A}}).$
\end{lemma}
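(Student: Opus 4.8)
\section*{Proof proposal for Lemma~\ref{l440}}

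The plan is to run the usual ``openness'' argument for $\mathcal{M}\Phi$-type classes, as in \cite{I}, the genuinely new point being that for a small perturbation one must also exhibit a common complement for the two finitely generated summands. So fix $\mathrm{F} \in \tilde{\mathcal{M} \Phi}_{0}(H_{\mathcal{A}})$ and a decomposition as in the definition of that class: $H_{\mathcal{A}} = M_{1} \tilde \oplus N_{1} \stackrel{\mathrm{F}}{\longrightarrow} M_{2} \tilde \oplus N_{2} = H_{\mathcal{A}}$, with respect to which $\mathrm{F}$ has matrix $\left\lbrack \begin{array}{cc} \mathrm{F}_{1} & 0 \\ 0 & \mathrm{F}_{4} \end{array} \right\rbrack$, $\mathrm{F}_{1} \colon M_{1} \to M_{2}$ an isomorphism, $N_{1}, N_{2}$ finitely generated, and $N \tilde \oplus N_{1} = N \tilde \oplus N_{2} = H_{\mathcal{A}}$ for some closed submodule $N$. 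For $\mathrm{G} \in B^{a}(H_{\mathcal{A}})$, write $\mathrm{F} + \mathrm{G}$ in the same decomposition as $\left\lbrack \begin{array}{cc} \mathrm{A} & \mathrm{B} \\ \mathrm{C} & \mathrm{D} \end{array} \right\rbrack$, where $\mathrm{A} = \mathrm{F}_{1} + \mathrm{G}_{1}$, $\mathrm{D} = \mathrm{F}_{4} + \mathrm{G}_{4}$, and $\mathrm{B}, \mathrm{C}$ are the off-diagonal corners of $\mathrm{G}$ (those of $\mathrm{F}$ being $0$). Every corner of $\mathrm{G}$ has norm at most a fixed multiple of $\|\mathrm{G}\|$, so for $\|\mathrm{G}\|$ small enough $\mathrm{A} = \mathrm{F}_{1}(1 + \mathrm{F}_{1}^{-1}\mathrm{G}_{1})$ is an isomorphism (Neumann series), $\|\mathrm{A}^{-1}\|$ stays uniformly bounded, and $\|\mathrm{A}^{-1}\mathrm{B}\| \to 0$ as $\|\mathrm{G}\| \to 0$.

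Next I would diagonalize $\mathrm{F} + \mathrm{G}$. Since $\mathrm{A}$ is invertible, the standard block factorization (cf.\ \cite[Lemma 2.7.10]{MT}) gives module automorphisms $\mathrm{R}_{0}$ of $H_{\mathcal{A}}$, with matrix $\left\lbrack \begin{array}{cc} 1 & \mathrm{A}^{-1}\mathrm{B} \\ 0 & 1 \end{array} \right\rbrack$ in $M_{1} \tilde \oplus N_{1}$, and $\mathrm{L}_{0}$ of $H_{\mathcal{A}}$, with matrix $\left\lbrack \begin{array}{cc} 1 & 0 \\ \mathrm{C}\mathrm{A}^{-1} & 1 \end{array} \right\rbrack$ in $M_{2} \tilde \oplus N_{2}$, such that $\mathrm{F} + \mathrm{G} = \mathrm{L}_{0}\, \mathrm{T}\, \mathrm{R}_{0}$, where $\mathrm{T}$ is block-diagonal with entries $\mathrm{A}$ and $\mathrm{D} - \mathrm{C}\mathrm{A}^{-1}\mathrm{B}$ relative to $M_{1} \tilde \oplus N_{1} \to M_{2} \tilde \oplus N_{2}$. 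Reading this factorization off, $\mathrm{F} + \mathrm{G}$ is block-diagonal with respect to
$$H_{\mathcal{A}} = \mathrm{R}_{0}^{-1}(M_{1}) \tilde \oplus \mathrm{R}_{0}^{-1}(N_{1}) \stackrel{\mathrm{F}+\mathrm{G}}{\longrightarrow} \mathrm{L}_{0}(M_{2}) \tilde \oplus \mathrm{L}_{0}(N_{2}) = H_{\mathcal{A}},$$
with $(1,1)$-entry an isomorphism. A direct computation gives $\mathrm{L}_{0}(N_{2}) = N_{2}$, since $\mathrm{L}_{0}$ restricts to the identity on $N_{2}$, while $\mathrm{R}_{0}^{-1}(N_{1}) = \lbrace (-\mathrm{A}^{-1}\mathrm{B}n,\, n) \mid n \in N_{1} \rbrace \cong N_{1}$; both summands are finitely generated.

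The hard part is the common complement of $\mathrm{R}_{0}^{-1}(N_{1})$ and $N_{2}$, and the claim is that the original $N$ still works. For $N_{2}$ this is given. For $\mathrm{R}_{0}^{-1}(N_{1})$, let $\mathrm{Q} \in B(H_{\mathcal{A}})$ be the bounded projection onto $N_{1}$ along $N$ and put $\mathrm{S} = \mathrm{R}_{0}^{-1}$; since the only nonzero off-diagonal corner of $\mathrm{S}$ is $-\mathrm{A}^{-1}\mathrm{B}$, $\mathrm{S}$ tends to the identity in norm as $\|\mathrm{G}\| \to 0$. Then $\mathrm{Q}\mathrm{S}$ maps $N_{1}$ into $N_{1}$, and $\mathrm{Q}\mathrm{S}$ restricted to $N_{1}$ differs from the identity of $N_{1}$ by $\mathrm{Q}(\mathrm{S} - 1)_{{\mid}_{N_{1}}}$, of norm at most $\|\mathrm{Q}\|\,\|\mathrm{S} - 1\|$; hence for $\|\mathrm{G}\|$ small it is invertible in the Banach algebra $B(N_{1})$ of bounded $\mathcal{A}$-linear operators on $N_{1}$. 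A short standard computation then shows that $\mathrm{E} = \mathrm{S}\,\bigl( (\mathrm{Q}\mathrm{S})_{{\mid}_{N_{1}}} \bigr)^{-1}\mathrm{Q}$ is a bounded idempotent on $H_{\mathcal{A}}$ with range $\mathrm{S}(N_{1}) = \mathrm{R}_{0}^{-1}(N_{1})$ and kernel $N$, whence $N \tilde \oplus \mathrm{R}_{0}^{-1}(N_{1}) = H_{\mathcal{A}}$. Therefore $\mathrm{F} + \mathrm{G} \in \tilde{\mathcal{M} \Phi}_{0}(H_{\mathcal{A}})$ for all sufficiently small $\mathrm{G}$, which is the openness assertion. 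The only delicate step is this common complement, and it succeeds because the finitely generated summand is moved by a small amount, so the condition of admitting $N$ as a complement --- an open condition --- persists.
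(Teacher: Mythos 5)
Your argument is correct, but it takes a genuinely different route at the decisive step than the paper does. Both proofs diagonalize the perturbed operator via the standard row/column factorization from the proof of \cite[Lemma 2.7.10]{MT}, obtaining a block-diagonal matrix with invertible $(1,1)$-corner, with the codomain summand $N_{2}$ unchanged and the domain summand $N_{1}$ replaced by an isomorphic copy; the difference lies in how the \emph{common} complement for the two finitely generated summands is secured after perturbation. The paper first normalizes the decomposition: using the observation from the proof of Theorem \ref{t430} that $\mathrm{P}\mathrm{F}_{\mid_{N}}$ is invertible in $B(N)$ (where $\mathrm{P}$ is the projection onto $N$ along $N_{2}$), it replaces $M_{1}$ by $N$ itself, so that the perturbed decomposition has the form $H_{\mathcal{A}}=N\tilde\oplus \mathrm{U}\tilde{\mathrm{U}}(N_{1})\longrightarrow \mathrm{V}^{-1}\tilde{\mathrm{V}}^{-1}(N)\tilde\oplus N_{2}=H_{\mathcal{A}}$ and $N$ remains a complement of both finitely generated summands for free. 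You instead keep the original decomposition and prove a stability statement: if $H_{\mathcal{A}}=N\tilde\oplus N_{1}$ and $\mathrm{S}$ is an automorphism with $\Vert \mathrm{S}-1\Vert$ small relative to $\Vert\mathrm{Q}\Vert^{-1}$ (with $\mathrm{Q}$ the bounded projection onto $N_{1}$ along $N$), then $H_{\mathcal{A}}=N\tilde\oplus\mathrm{S}(N_{1})$, exhibited by the bounded idempotent $\mathrm{E}=\mathrm{S}\bigl((\mathrm{Q}\mathrm{S})_{\mid_{N_{1}}}\bigr)^{-1}\mathrm{Q}$, whose range is $\mathrm{S}(N_{1})$ and kernel is $N$; your verification of idempotency, range and kernel is sound, and since the definition of $\tilde{\mathcal{M}\Phi}_{0}(H_{\mathcal{A}})$ imposes no adjointability on projections, nothing is missing. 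The paper's normalization yields a shorter proof and is the same pattern reused for the openness of $\widehat{\mathcal{M}\Phi}_{+}^{-}(H_{\mathcal{A}})$ and $\widehat{\mathcal{M}\Phi}_{-}^{+}(H_{\mathcal{A}})$, while your complement-stability construction is self-contained and of independent interest; note only that transporting it verbatim to those latter classes would require an additional argument, because there the projection onto $N$ along $N_{2}^{\prime}$ is required to be adjointable, and your $\mathrm{E}$ is merely bounded.
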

\begin{proof}
	If $ \mathrm{F} \in \tilde{\mathcal{M} \Phi}_{0}(H_{\mathcal{A}})  $ , then there exists a decomposition
	$$H_{\mathcal{A}} = M_{1} \tilde \oplus {N_{1}}\stackrel{\mathrm{F}}{\longrightarrow} M_{2} \tilde \oplus N_{2}= H_{\mathcal{A}} $$
	w.r.t. which $\mathrm{F}$ has the matrix
	$\left\lbrack
	\begin{array}{cc}
	\mathrm{F}_{1} & 0 \\
	0 & \mathrm{F}_{4} \\
	\end{array}
	\right \rbrack
	,$  
	where  $ \mathrm{F}_{1}  $ is an isomorphism,  $ N_{1},N_{2}  $  are finitely generated and  $H_{\mathcal{A}} =N \tilde \oplus {N_{1}} = N \tilde \oplus N_{2}   $  for some closed submodule  $ N  .$ We may w.l.g. assume that  $M_{1}=N   $ . Indeed, as we have seen in the proof of the Theorem \ref{t430}, we have that  $\mathrm{P}\mathrm{F}_{{\mid}_{N}}   $  is invertible in  $ B(N)  ,$  where  $  \mathrm{P} $  is the projection onto  $ N  $  along  $ N_{2}  $ . Then, w.r.t. the decomposition
	$$H_{\mathcal{A}} = N \tilde \oplus N_{1}\stackrel{\mathrm{F}}{\longrightarrow} N \tilde \oplus N_{2}= H_{\mathcal{A}},$$  
	$ \mathrm{F} $ has the matrix
	$\left\lbrack
	\begin{array}{cc}
	\tilde{\mathrm{F}}_{1} & 0 \\
	\tilde{\mathrm{F}}_{2} &  \mathrm{F}_{4} \\
	\end{array}
	\right \rbrack
	,$   
	where  $ \tilde{\mathrm{F}}_{1}  $  is an isomorphism, so $\mathrm{F}$ has the matrix
	$\left\lbrack
	\begin{array}{cc}
	\tilde {\tilde{\mathrm{F}}}_{1} & 0 \\
	0 & \tilde{\mathrm{F}}_{4} \\
	\end{array}
	\right \rbrack
	$     
	w.r.t. the decomposition
	$$H_{\mathcal{A}} = N \tilde \oplus \tilde{\mathrm{U}}(N_{1})\stackrel{\mathrm{F}}{\longrightarrow} \tilde{\mathrm{V}}^{-1}(N) \tilde \oplus N_{2}= H_{\mathcal{A}},$$  
	where  $ \tilde {\tilde{\mathrm{F}}}_{1} ,   \tilde{\mathrm{U}}, \tilde{\mathrm{V}} $  are isomorphisms. Hence
	$$H_{\mathcal{A}} = N \tilde \oplus \tilde{\mathrm{U}}(N_{1})= N \tilde \oplus N_{2} ,$$  
	so we may assume w.l.g. that  $N=M_{1}   .$\\
	Now, by the proof of \cite[Lemma 2.7.10 ]{MT}, there exists some $\epsilon >0$ such that if $ \mathrm{D} \in B^{a}(H_{\mathcal{A}}) $ and $||\mathrm{D}-\mathrm{F}||<\epsilon ,$ then $\mathrm{D}$ has the matrix 
	$\left\lbrack
	\begin{array}{cc}
	\mathrm{D}_{1} & 0 \\
	0 & \mathrm{D}_{4} \\
	\end{array}
	\right \rbrack
	,$
	w.r.t. the decomposition 
	$$H_{\mathcal{A}} =N \tilde \oplus \mathrm{U}\tilde{\mathrm{U}}(N_{1})\stackrel{\mathrm{D}}{\longrightarrow}  \mathrm{V}^{-1}\tilde{\mathrm{V}}^{-1}(N) \tilde \oplus N_{2}= H_{\mathcal{A}} ,  $$ 
	where  $\mathrm{U},\mathrm{V} $ are isomorphisms, and $\mathrm{D}_{1}$ is an isomorphism. Since
	$$H_{\mathcal{A}} =N \tilde \oplus \mathrm{U}\tilde{\mathrm{U}}(N_{1}) =  N \tilde \oplus N_{2}= H_{\mathcal{A}} ,  $$
	it follows that $\mathrm{D} \in \tilde{\mathcal{M}} \Phi_{0}(H_{\mathcal{A}}).$
\end{proof}
\begin{flushright}
	$\boxdot$
\end{flushright}
We let now   $ \widehat{\mathcal{M} \Phi}_{+}^{-} (H_{\mathcal{A}})$  be the space of all $\mathrm{F} \in B^{a}(H_{\mathcal{A}}) $ such that there exists a decomposition
$$H_{\mathcal{A}} = M_{1} \tilde \oplus {N_{1}}\stackrel{\mathrm{F}}{\longrightarrow}  M_{2} \tilde \oplus N_{2}= H_{\mathcal{A}},$$  
w.r.t. which $\mathrm{F}$ has the matrix 
$\left\lbrack
\begin{array}{cc}
\mathrm{F}_{1} & 0 \\
0 & \mathrm{F}_{4} \\
\end{array}
\right \rbrack
,$ 
where $\mathrm{F}_{1}$ is an isomorphism,  $ N_{1}  $  is finitely generated and such that there exist closed submodules  $ N_{2}^{\prime},N  $  where  $N_{2}^{\prime} \subseteq N_{2},N_{2}^{\prime} \cong N_{1} ,$ 
$H_{\mathcal{A}}= N \tilde{\oplus} N_{1} = N \tilde{\oplus} N_{2}^{\prime} \textrm { and  the projection onto } N \textrm{ along } N_{2}^{\prime}$ is adjointable.\\
Then we set
$$\sigma_{e\tilde{a}}^{\mathcal{A}} (\mathrm{F}):= \lbrace \alpha \in Z(\mathcal{A}) \mid (\mathrm{F}-\alpha \mathrm{I} ) \notin \widehat{\mathcal{M} \Phi}_{+}^{-}(H_{\mathcal{A}}) \rbrace     .$$
\begin{theorem} \label{t450} 
	Let  $\mathrm{F} \in B^{a}(H_{\mathcal{A}})   .$ Then  $\sigma_{e\tilde{a}}^{\mathcal{A}} (\mathrm{F})=\cap \lbrace  \sigma_{a}^{\mathcal{A}}  (\mathrm{P}\mathrm{F}_{{\mid}_{\mathrm{R}(\mathrm{P})}}) \mid  \mathrm{P}\in \mathrm{P}^{a}(H_{\mathcal{A}}) \rbrace   $  
	where
	$ \sigma_{a}^{\mathcal{A}}  (\mathrm{P}\mathrm{F}_{{\mid}_{\mathrm{R}(\mathrm{P})}})$ is the set of all  $\alpha \in Z(\mathcal{A})$ s.t. $(\mathrm{P}\mathrm{F}-\alpha \mathrm{I} )_{{\mid}_{\mathrm{R}(\mathrm{P})}}$ is not bounded below on  $ \mathrm{R}(\mathrm{P})$  and $\mathrm{P}^{a}(H_{\mathcal{A}}) = \mathrm{P}(H_{\mathcal{A}}) \cap B^{a}(H_{\mathcal{A}}).$
\end{theorem}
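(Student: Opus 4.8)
The plan is to adapt the proof of Theorem~\ref{t430}, replacing $\tilde{\mathcal{M}}\Phi_{0}(H_{\mathcal{A}})$ by $\widehat{\mathcal{M}\Phi}_{+}^{-}(H_{\mathcal{A}})$, the family $\mathrm{P}(H_{\mathcal{A}})$ by $\mathrm{P}^{a}(H_{\mathcal{A}})$, and the word \emph{invertible} by \emph{bounded below}. I would prove the two inclusions separately, each by contraposition.

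For $\sigma_{e\tilde{a}}^{\mathcal{A}}(\mathrm{F})\subseteq\cap\{\sigma_{a}^{\mathcal{A}}(\mathrm{P}\mathrm{F}_{\mid_{\mathrm{R}(\mathrm{P})}})\mid\mathrm{P}\in\mathrm{P}^{a}(H_{\mathcal{A}})\}$ I start from an $\alpha\in Z(\mathcal{A})$ outside the right-hand side and fix $\mathrm{P}\in\mathrm{P}^{a}(H_{\mathcal{A}})$ with $(\mathrm{P}\mathrm{F}-\alpha\mathrm{I})_{\mid_{\mathrm{R}(\mathrm{P})}}$ bounded below on $\mathrm{R}(\mathrm{P})$. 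Since $\mathrm{P}$ is adjointable, $\mathrm{R}(\mathrm{P})$ is orthogonally complemented in $H_{\mathcal{A}}$ and $(\mathrm{P}\mathrm{F}-\alpha\mathrm{I})_{\mid_{\mathrm{R}(\mathrm{P})}}$ is an adjointable operator on the Hilbert module $\mathrm{R}(\mathrm{P})$; being bounded below it is injective with closed range, so by \cite[Theorem~2.3.3]{MT} its range $M_{2}^{\prime}$ is orthogonally complemented in $\mathrm{R}(\mathrm{P})$, say $\mathrm{R}(\mathrm{P})=M_{2}^{\prime}\oplus\tilde{N}$. With respect to $H_{\mathcal{A}}=\mathrm{R}(\mathrm{P})\tilde{\oplus}\mathrm{N}(\mathrm{P})\to M_{2}^{\prime}\tilde{\oplus}(\tilde{N}\tilde{\oplus}\mathrm{N}(\mathrm{P}))=H_{\mathcal{A}}$ the operator $\mathrm{F}-\alpha\mathrm{I}$ has a matrix whose $(1,1)$ entry is the isomorphism $(\mathrm{P}\mathrm{F}-\alpha\mathrm{I})_{\mid_{\mathrm{R}(\mathrm{P})}}\colon\mathrm{R}(\mathrm{P})\to M_{2}^{\prime}$ (the component into $\tilde{N}$ vanishes because $M_{2}^{\prime}$ is the full range of $(\mathrm{P}\mathrm{F}-\alpha\mathrm{I})_{\mid_{\mathrm{R}(\mathrm{P})}}$, and the component into $\mathrm{N}(\mathrm{P})$ is $(\mathrm{I}-\mathrm{P})\mathrm{F}_{\mid_{\mathrm{R}(\mathrm{P})}}$). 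Diagonalizing this matrix by upper/lower triangular isomorphisms $\mathrm{U},\mathrm{V}$ exactly as in Theorem~\ref{t430}, I obtain a decomposition $H_{\mathcal{A}}=\mathrm{R}(\mathrm{P})\tilde{\oplus}\mathrm{U}(\mathrm{N}(\mathrm{P}))\to\mathrm{V}^{-1}(M_{2}^{\prime})\tilde{\oplus}(\tilde{N}\tilde{\oplus}\mathrm{N}(\mathrm{P}))=H_{\mathcal{A}}$ with diagonal matrix and isomorphic $(1,1)$ entry. Setting $N_{1}=\mathrm{U}(\mathrm{N}(\mathrm{P}))$ (finitely generated), $N_{2}=\tilde{N}\tilde{\oplus}\mathrm{N}(\mathrm{P})$, $N_{2}^{\prime}=\mathrm{N}(\mathrm{P})\subseteq N_{2}$ (isomorphic to $N_{1}$ via $\mathrm{U}$), and $N=\mathrm{R}(\mathrm{P})$ — so that $H_{\mathcal{A}}=N\tilde{\oplus}N_{1}=N\tilde{\oplus}N_{2}^{\prime}$ with the projection onto $N$ along $N_{2}^{\prime}$ equal to the adjointable $\mathrm{P}$ — shows $\mathrm{F}-\alpha\mathrm{I}\in\widehat{\mathcal{M}\Phi}_{+}^{-}(H_{\mathcal{A}})$, i.e.\ $\alpha\notin\sigma_{e\tilde{a}}^{\mathcal{A}}(\mathrm{F})$.

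For the reverse inclusion I start from $\alpha\notin\sigma_{e\tilde{a}}^{\mathcal{A}}(\mathrm{F})$, so $\mathrm{F}-\alpha\mathrm{I}\in\widehat{\mathcal{M}\Phi}_{+}^{-}(H_{\mathcal{A}})$ with an associated diagonal decomposition $H_{\mathcal{A}}=M_{1}\tilde{\oplus}N_{1}\to M_{2}\tilde{\oplus}N_{2}=H_{\mathcal{A}}$ (isomorphic $(1,1)$ entry, $N_{1}$ finitely generated) together with closed submodules $N_{2}^{\prime}\subseteq N_{2}$ and $N$ satisfying $N_{2}^{\prime}\cong N_{1}$, $H_{\mathcal{A}}=N\tilde{\oplus}N_{1}=N\tilde{\oplus}N_{2}^{\prime}$, and with the projection $\mathrm{P}$ onto $N$ along $N_{2}^{\prime}$ adjointable. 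Repeating the manipulations from the converse part of Theorem~\ref{t430} — using that the restriction of $\sqcap_{M_{1}}$ to $N$ is an isomorphism $N\to M_{1}$ (since $H_{\mathcal{A}}=N\tilde{\oplus}N_{1}=M_{1}\tilde{\oplus}N_{1}$), and then clearing the lower-left corner by a lower-triangular isomorphism $\mathrm{V}$ — I get that $(\mathrm{F}-\alpha\mathrm{I})_{\mid_{N}}$ is an isomorphism of $N$ onto $\mathrm{V}^{-1}(M_{2})$, where $H_{\mathcal{A}}=\mathrm{V}^{-1}(M_{2})\tilde{\oplus}N_{2}$. Because $\mathrm{N}(\mathrm{P})=N_{2}^{\prime}$ is finitely generated and $\mathrm{P}$ is adjointable, $\mathrm{P}\in\mathrm{P}^{a}(H_{\mathcal{A}})$ with $\mathrm{R}(\mathrm{P})=N$, and it remains to show $(\mathrm{P}\mathrm{F}-\alpha\mathrm{I})_{\mid_{N}}=\mathrm{P}(\mathrm{F}-\alpha\mathrm{I})_{\mid_{N}}\colon N\to N$ is bounded below. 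It is injective, since it equals $\mathrm{P}_{\mid_{\mathrm{V}^{-1}(M_{2})}}\circ(\mathrm{F}-\alpha\mathrm{I})_{\mid_{N}}$ and $\mathrm{V}^{-1}(M_{2})\cap\mathrm{N}(\mathrm{P})\subseteq\mathrm{V}^{-1}(M_{2})\cap N_{2}=0$; and it has closed range, because adjointability of $\mathrm{I}-\mathrm{P}$ makes $N_{2}^{\prime}$ orthogonally complemented in $H_{\mathcal{A}}$, hence $N_{2}=N_{2}^{\prime}\oplus N_{2}^{\prime\prime}$ for a closed $N_{2}^{\prime\prime}$, hence $H_{\mathcal{A}}=\mathrm{V}^{-1}(M_{2})\tilde{\oplus}N_{2}^{\prime}\tilde{\oplus}N_{2}^{\prime\prime}$, so $\mathrm{P}$ restricts to an isomorphism of $\mathrm{V}^{-1}(M_{2})\tilde{\oplus}N_{2}^{\prime\prime}$ onto $N$ and therefore sends the closed submodule $\mathrm{V}^{-1}(M_{2})$ onto the closed submodule $\mathrm{P}(\mathrm{V}^{-1}(M_{2}))=\mathrm{R}(\mathrm{P}(\mathrm{F}-\alpha\mathrm{I})_{\mid_{N}})$ of $N$. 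A bounded injective map with closed range between the Banach spaces underlying two Hilbert modules is bounded below by the open mapping theorem, so $(\mathrm{P}\mathrm{F}-\alpha\mathrm{I})_{\mid_{\mathrm{R}(\mathrm{P})}}$ is bounded below and $\alpha\notin\sigma_{a}^{\mathcal{A}}(\mathrm{P}\mathrm{F}_{\mid_{\mathrm{R}(\mathrm{P})}})$ for this $\mathrm{P}$, whence $\alpha$ lies outside the intersection. Combining the two inclusions gives the equality.

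The routine part is the sequence of reductions and block diagonalizations, which runs parallel to the proof of Theorem~\ref{t430}. The steps that need genuine care — and which I expect to be the main obstacle — are, first, the passage from \emph{adjointable and bounded below} to \emph{closed, orthogonally complemented range} in the first inclusion, and second, in the reverse inclusion, deducing closedness of $\mathrm{P}(\mathrm{V}^{-1}(M_{2}))$ from the hypothesis that the projection onto $N$ along $N_{2}^{\prime}$ is adjointable, together with the upgrade from \emph{injective with closed range} to \emph{bounded below} via the open mapping theorem; it is precisely this interplay that forces the intersection to run over $\mathrm{P}^{a}(H_{\mathcal{A}})$ rather than over all of $\mathrm{P}(H_{\mathcal{A}})$.
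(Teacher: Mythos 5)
Your argument is correct and follows essentially the same route as the paper's proof: the forward inclusion builds the $\widehat{\mathcal{M}\Phi}_{+}^{-}$ decomposition with $N=\mathrm{R}(\mathrm{P})$, $N_{2}^{\prime}=\mathrm{N}(\mathrm{P})$ after invoking \cite[Theorem 2.3.3]{MT} to orthogonally complement the closed range, and the converse diagonalizes exactly as in Theorem \ref{t430} and uses that $\mathrm{P}$ restricts to an isomorphism of $\mathrm{V}^{-1}(M_{2})\tilde{\oplus}N_{2}^{\prime\prime}$ onto $N$ to get $\mathrm{P}(\mathrm{F}-\alpha\mathrm{I})_{\mid_{N}}$ bounded below. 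The only (harmless) deviations are that you obtain $N_{2}=N_{2}^{\prime}\oplus N_{2}^{\prime\prime}$ from adjointability of $\mathrm{I}-\mathrm{P}$ via \cite[Theorem 2.3.3]{MT}, where the paper instead uses finite generation of $N_{2}^{\prime}$ and \cite[Lemma 2.3.7]{MT}, and that you phrase the last step as injectivity plus closed range plus the open mapping theorem rather than as a composition of isomorphisms onto closed submodules.
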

\begin{proof}
	Suppose that  $ \alpha \notin  \sigma_{a}^{\mathcal{A}}  (\mathrm{P})   $  for some  $ \mathrm{P}\in \mathrm{P}^{a}(H_{\mathcal{A}}),  \alpha \in Z(\mathcal{A})  .$ Then the operator  $ (\mathrm{P}\mathrm{F}-\alpha \mathrm{I} )_{{\mid}_{\mathrm{R}(\mathrm{P})}}    $  is bounded below on $\mathrm{R}(\mathrm{P}),$ hence its image is closed. But  $\mathrm{R} ((\mathrm{P}\mathrm{F}-\alpha \mathrm{I} )_{{\mid}_{\mathrm{R}(\mathrm{P})}}) =\mathrm{R} (\mathrm{P}\mathrm{F}\mathrm{P}-\alpha \mathrm{P} )    .$ Since  $(\mathrm{P}\mathrm{F}\mathrm{P}-\alpha \mathrm{P} )   $  can be viewed as an adjointable operator from  $H_{\mathcal{A}}   $  into  $\mathrm{R}(\mathrm{P})   $ , from \cite[Theorem 2.3.3]{MT} it follows that   $\mathrm{R}(\mathrm{P}\mathrm{F}-\alpha \mathrm{I} )_{{\mid}_{\mathrm{R}(\mathrm{P})}} =\mathrm{R} (\mathrm{P}\mathrm{F}\mathrm{P}-\alpha \mathrm{P} )    $  is orthogonally complementable in $\mathrm{R}(\mathrm{P})$. So  $ \mathrm{R}(\mathrm{P})=M \oplus M^{\prime}  ,$  where  $ M=\mathrm{R} (\mathrm{P}\mathrm{F}-\alpha \mathrm{P} )   .$ Hence  $H_{\mathcal{A}}= M \tilde \oplus M^{\prime} \tilde \oplus \mathrm{N}(\mathrm{P}) $  and  $(\mathrm{P}\mathrm{F}-\alpha \mathrm{I} )_{{\mid}_{\mathrm{R}(\mathrm{P})}}   $  is an isomorphism from $\mathrm{R}(\mathrm{P})$ onto $ M.$ It follows that w.r.t. the decomposition
	$$H_{\mathcal{A}} = \mathrm{R}(\mathrm{P}) \tilde \oplus \mathrm{N}(\mathrm{P})\stackrel{\mathrm{F}-\alpha \mathrm{I}}{\longrightarrow}   M \tilde \oplus (\tilde{M}^{\prime} \tilde{\oplus} \mathrm{N}(\mathrm{P}))= H_{\mathcal{A}} ,   $$  
	$\mathrm{F}-\alpha \mathrm{I} $ has the matrix
	$\left\lbrack
	\begin{array}{cc}
	(\mathrm{F}-\alpha \mathrm{I})_{1} & (\mathrm{F}-\alpha \mathrm{I})_{2} \\
	(\mathrm{F}-\alpha \mathrm{I})_{3} & (\mathrm{F}-\alpha \mathrm{I})_{4} \\
	\end{array}
	\right \rbrack
	,$ 
	where  $(\mathrm{F}-\alpha \mathrm{I})_{1}   $  is an isomorphism. Hence w.r.t. the decomposition
	$$ H_{\mathcal{A}} =\mathrm{ R}(\mathrm{P}) \tilde \oplus \mathrm{U}(\mathrm{N}(\mathrm{P}))\stackrel{\mathrm{F}-\alpha \mathrm{I}}{\longrightarrow}   \mathrm{V}^{-1}(M) \tilde \oplus ({M}^{\prime} \tilde \oplus \mathrm{N}(\mathrm{P}))= H_{\mathcal{A}},   $$
	$\mathrm{F}-\alpha \mathrm{I} $ has the matrix
	$\left\lbrack
	\begin{array}{cc}
	\overbrace{(\mathrm{F}-\alpha \mathrm{I})_{1}} & 0 \\
	0 & \overbrace{(\mathrm{F}-\alpha \mathrm{I})_{4}} \\
	\end{array}
	\right \rbrack
	,$ 
	where  $\overbrace{(\mathrm{F}-\alpha \mathrm{I})_{1}},\mathrm{U},\mathrm{V}   $  are isomorphisms. Set  $N=M_{1}=\mathrm{ R}(\mathrm{P}),N_{1}=\mathrm{U}(\mathrm{N}(\mathrm{P})),M_{2}=\mathrm{V}^{-1}(M)   ,$ \\
	$N_{2}=M^{\prime}  \tilde \oplus \mathrm{N}(\mathrm{P})    $  and  $N_{2}^{\prime}=\mathrm{N}(\mathrm{P})    .$ It follows that
	$$ H_{\mathcal{A}} = N \tilde \oplus N_{1} = N \tilde \oplus N_{2}^{\prime}, N_{1} \cong N_{2}^{\prime} \subseteq N_{2}  $$  and  $\mathrm{F}-\alpha \mathrm{I}   $  has the matrix
	$\left\lbrack
	\begin{array}{cc}
	\overbrace{(\mathrm{F}-\alpha \mathrm{I})_{1}} & 0 \\
	0 & \overbrace{(\mathrm{F}-\alpha \mathrm{I})_{4}} \\
	\end{array}
	\right \rbrack
	$   
	w.r.t. the decomposition
	$ H_{\mathcal{A}} = M_{1} \tilde \oplus {N_{1}}\stackrel{\mathrm{F}-\alpha \mathrm{I}}{\longrightarrow}   M_{2} \tilde \oplus N_{2}= H_{\mathcal{A}}   $  
	where $ \overbrace{(\mathrm{F}-\alpha \mathrm{I})_{1}} $ is an isomorphism and  $N_{1}=\mathrm{N}(\mathrm{P})   $  is finitely generated. Thus  $\alpha \notin \sigma_{e\tilde{a}}^{\mathcal{A}} (\mathrm{F})   .$\\
	Conversely, suppose that
	$  \alpha \in Z(\mathcal{A}) \setminus \sigma_{e\tilde{a}}^{\mathcal{A}} (\mathrm{F}) .$ Then, there exists a decomposition
	$$ H_{\mathcal{A}} = M_{1} \tilde \oplus {N_{1}}\stackrel{\mathrm{F}-\alpha \mathrm{I}}{\longrightarrow}   M_{2} \tilde \oplus N_{2} = H_{\mathcal{A}}   $$ 
	w.r.t. which  $\mathrm{F}-\alpha \mathrm{I}   $  has the matrix
	$\left\lbrack
	\begin{array}{cc}
	(\mathrm{F}-\alpha \mathrm{I})_{1} & 0 \\
	0 & (\mathrm{F}-\alpha \mathrm{I})_{4} \\
	\end{array}
	\right \rbrack
	,$  
	where  $(\mathrm{F}-\alpha \mathrm{I})_{1}   $  is an isomorphism,  $N_{1}   $  is finitely generated and there exists some closed submodules  $ N,N_{2}^{\prime}  $  such that  $N_{2}^{\prime} \subseteq N_{2} , N_{2}^{\prime} \cong N_{1}   ,$  $N \tilde{\oplus} N_{1}=N \tilde{\oplus} N_{2}^{\prime}=H_{\mathcal{A}} $ and the projection onto $N$ along $N_{2}^{\prime} $ is adjointable. As we have seen in the proof of Theorem \ref{t430} ,  $ \sqcap_{M_{2}}(\mathrm{F}-\alpha \mathrm{I})_{\mid_{N}}  $  is an isomorphism then, where $\sqcap_{M_{2}}$ denotes the projection onto $M_{2}$ along $N_{2}$ . Therefore, w.r.t. the decomposition
	$$ H_{\mathcal{A}} = N \tilde \oplus U(N_{1})\stackrel{\mathrm{F}-\alpha \mathrm{I}}{\longrightarrow}   \mathrm{V}^{-1}(M_{2}) \tilde \oplus N_{2} = H_{\mathcal{A}}   ,$$ 
	${(\mathrm{F}-\alpha \mathrm{I})} $ has the matrix
	$\left\lbrack
	\begin{array}{cc}
	\overbrace{(\mathrm{F}-\alpha \mathrm{I})_{1}} & 0 \\
	0 & \overbrace{(\mathrm{F}-\alpha \mathrm{I})_{4}} \\
	\end{array}
	\right \rbrack
	,$  
	where  $\overbrace{(\mathrm{F}-\alpha \mathrm{I})_{1}} ,U,\mathrm{V}   $  are isomorphisms. Hence  $(\mathrm{F}-\alpha \mathrm{I})_{\mid_{N}}   $  maps $N$ isomorphically onto  $\mathrm{V}^{-1}(M)   .$ Since 
	$N_{2}^{\prime} \cong N_{1} ,  $  it follows that  $ N_{2}^{\prime}   $  is finitely generated (as $N_{1}   $  is so), hence, \\
	by \cite[Lemma 2.3.7]{MT}, as $ N_{2}^{\prime}$   is a closed submodule  of  $N_{2} ,$ we get that \\
	$N_{2}=N_{2}^{\prime} \tilde \oplus {N_{2}^{\prime}}^{\prime} $ for some closed submodule ${N_{2}^{\prime}}^{\prime}  $ of $N_{2}.$ So  
	$$ H_{\mathcal{A}} =\mathrm{V}^{-1} (M_{2}) \oplus N_{2} = \mathrm{V}^{-1} (M_{2}) \tilde \oplus {N_{2}^{\prime}}^{\prime} \tilde \oplus N_{2}^{\prime}= N \tilde \oplus  N_{2}^{\prime}.$$ 
	It follows that if  $ \mathrm{P}  $  is the projection onto $N$ along  $ N_{2}^{\prime}  ,$ then  $ \mathrm{P}_{\mid_{\mathrm{V}^{-1} (M_{2}) \tilde \oplus {N_{2}^{\prime}}^{\prime}}}  $  
	is an isomorphism from  $\mathrm{V}^{-1} (M_{2}) \tilde \oplus {N_{2}^{\prime}}^{\prime}   $  onto $N$. Hence  $\mathrm{P}_{\mid_{\mathrm{V}^{-1} (M_{2})}}  $  maps  $ \mathrm{V}^{-1} (M_{2})  $  isomorphically onto some closed submodule of $N.$ Using this together with the fact that  $(\mathrm{F}-\alpha \mathrm{I})_{\mid_{N}}   ,$ is an isomorphism from $ N $ onto  $\mathrm{V}^{-1} (M_{2})  , $  we obtain that  $\mathrm{P}(\mathrm{F}-\alpha \mathrm{I})_{\mid_{N}}    $  is bounded below. Thus  $\alpha \notin \sigma_{{a}}^{\mathcal{A}} (\mathrm{P}\mathrm{F}_{\mid_{\mathrm{ R}(\mathrm{P})}}).$
\end{proof}
\begin{flushright}
	$\boxdot$
\end{flushright}
\begin{remark}
	In the similar way as for $\tilde{\mathcal{M} \Phi_{0}}(H_{\mathcal{A}})$, one can show that\\
	$\widehat{\mathcal{M} \Phi}_{+}^{-}(H_{\mathcal{A}})$ is open in $B^{a}(H_{\mathcal{A}}) .$	
\end{remark}
Indeed, let $F \in \widehat{\mathcal{M} \Phi}_{+}^{-}(H_{\mathcal{A}})  $  and choose a decomposition $$H_{\mathcal{A}} = M_{1} \tilde \oplus {N_{1}}\stackrel{F}{\longrightarrow} M_{2} \tilde \oplus N_{2}= H_{\mathcal{A}}  $$ w.r.t. which $F$ has the matrix 
$\left\lbrack
\begin{array}{cc}
F_{1} & 0 \\
0 & F_{4} \\
\end{array}
\right \rbrack,
$ where $F_{1}  $ is an isomorphism. Let $N$ be a closed submodule of $H_{\mathcal{A}}  $ s.t. $H_{\mathcal{A}} = N \tilde \oplus {N_{1}}= N \tilde \oplus N_{2}^{\prime},$ $N_{2}^{\prime} \subseteq  N_{2},$ and the projection onto $N$ along $N_{2}^{\prime}  $ is adjointable. Such decomposition exists as $ F \in \widehat{\mathcal{M} \Phi}_{+}^{-}(H_{\mathcal{A}})  .$ It is easy to see that, if we let $\sqcap_{M_{1}},\sqcap_{M_{2}}  $ denote the projections onto $M_{1}  $ along $N_{1}  $ and onto $ M_{2} $ along $N_{2}  ,$ respectively, then $\sqcap_{M_{2}} F_{\mid_{N}}=F\sqcap_{{M_{1}}_{\mid_{N}}}   $ is an isomorphism. Hence, w.r.t. the decomposition $H_{\mathcal{A}} = N \tilde \oplus {N_{1}}\stackrel{F}{\longrightarrow} M_{2} \tilde \oplus N_{2}= H_{\mathcal{A}}  ,$ $F$ has the matrix 
$\left\lbrack
\begin{array}{cc}
\tilde{{F}_{1}} & 0 \\
\tilde{{F}_{2}} & F_{4} \\
\end{array}
\right \rbrack,
$ 
where $\tilde{{F}_{1}}  $ is an isomorphism. Then, using the techniques of diagonalization as in the proof of \cite[Lemma 2.7.10]{MT}, we get that $F$ has the matrix 
$\left\lbrack
\begin{array}{cc}
\tilde{\tilde{{F}_{1}}} & 0 \\
0 & F_{4} \\
\end{array}
\right \rbrack,
$ 
w.r.t. the decomposition $H_{\mathcal{A}} = N \tilde \oplus {N_{1}}\stackrel{F}{\longrightarrow} V^{-1}(M_{2}) \tilde \oplus N_{2}= H_{\mathcal{A}}   ,$ where $V,\tilde{\tilde{{F}_{1}}}   $ are isomorphisms. By the proof of \cite[Lemma 2.7.10]{MT}, there exists an $\epsilon >0  $ s.t. if $ \parallel F-D \parallel < \epsilon ,$ then $D$ has the matrix 
$\left\lbrack
\begin{array}{cc}
D_{1} & 0 \\
0 & D_{4} \\
\end{array}
\right \rbrack,
$  
w.r.t. the decomposition $H_{\mathcal{A}} = N \tilde \oplus U({N_{1}})\stackrel{D}{\longrightarrow} \tilde{V}^{-1}V^{-1}(M_{2}) \tilde \oplus N_{2}= H_{\mathcal{A}} ,$ where $U,\tilde{V},D_{1}  $ are isomorphisms. Since $H_{\mathcal{A}} = N \tilde \oplus U({N_{1}})= N \tilde \oplus N_{2}^{\prime},$ $N_{2}^{\prime} \subseteq N_{2} $ and the projection onto $N$ along $N_{2}^{\prime}  $ is adjointable,  it follows that $D \in \widehat{\mathcal{M} \Phi}_{+}^{-}(H_{\mathcal{A}}).$
\begin{definition} \label{d470} 
	We set $\widehat{\mathcal{M} \Phi}_{-}^{+}(H_{\mathcal{A}})$ to be the set of all $ \mathrm{D} \in B^{a}  (H_{\mathcal{A}}) $ such that there exists a decomposition
	$$H_{\mathcal{A}} = M_{1}^{\prime} \tilde \oplus {N_{1}^{\prime}}\stackrel{\mathrm{D}}{\longrightarrow} M_{2}^{\prime} \tilde \oplus N_{2}^{\prime}= H_{\mathcal{A}} $$ 
	w.r.t. which $\mathrm{D}$ has the matrix 
	$\left\lbrack
	\begin{array}{cc}
	\mathrm{D}_{1} & 0 \\
	0 & \mathrm{D}_{4}  \\
	\end{array}
	\right \rbrack
	,$ where $\mathrm{D}_{1}$ is an isomorphism, $N_{2}^{\prime}$ is finitely generated and such that $H_{\mathcal{A}}=M_{1}^{\prime} \tilde \oplus N \tilde \oplus N_{2}^{\prime}$ for some closed submodule $N,$ where the projection onto $M_{1}^{\prime} \tilde \oplus N $ along $N_{2}^{\prime}$ is adjointable.
\end{definition}
Then we set 
$$\sigma_{e \tilde{d}}^{\mathcal{A}}(\mathrm{D})=\lbrace \alpha \in Z(\mathcal{A})) \mid (\mathrm{D}-\alpha \mathrm{I}) \notin \widehat{\mathcal{M} \Phi}_{-}^{+}(H_{\mathcal{A}}) \rbrace$$
and for $\mathrm{P} \in \mathrm{P}^{a}(H_{\mathcal{A}})$ we set 
$$\sigma_{d}^{\mathcal{A}}(\mathrm{P}\mathrm{D}_{\mid_{\mathrm{ R}(\mathrm{P})}})=\lbrace \alpha \in Z(\mathcal{A})) \mid (\mathrm{P}\mathrm{D}-\alpha \mathrm{I})_{\mid_{\mathrm{ R}(\mathrm{P})}} \textrm{ is not onto }\mathrm{ R}(\mathrm{P}) \rbrace .$$
We have then the following theorem.
\begin{theorem} \label{t480} 
	Let $\mathrm{D} \in B^{a}(H_{\mathcal{A}}).$ Then 
	$$\sigma_{e \tilde{d}}^{\mathcal{A}}(\mathrm{D})= \bigcap \lbrace \sigma_{d}^{\mathcal{A}} (\mathrm{P}\mathrm{D}_{\mid_{\mathrm{ R}(\mathrm{P})}}) \mid \mathrm{P} \in \mathrm{P}^{a} (H_{\mathcal{A}}) \rbrace   $$
\end{theorem}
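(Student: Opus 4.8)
The plan is to prove the set equality by establishing, for each $\alpha \in Z(\mathcal{A})$, the equivalence ``$\mathrm{D}-\alpha \mathrm{I} \in \widehat{\mathcal{M} \Phi}_{-}^{+}(H_{\mathcal{A}})$ if and only if $(\mathrm{P}\mathrm{D}-\alpha \mathrm{I})_{\mid_{\mathrm{R}(\mathrm{P})}}$ is onto $\mathrm{R}(\mathrm{P})$ for some $\mathrm{P} \in \mathrm{P}^{a}(H_{\mathcal{A}})$'', and then passing to complements in $Z(\mathcal{A})$. This follows the scheme of the proof of Theorem \ref{t450}, with ``bounded below'' systematically replaced by ``onto''; I expect it to be somewhat lighter, because the surjective version never needs the kernel of the compression to be finitely generated.

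For the ``only if'' direction I would start from a decomposition $H_{\mathcal{A}} = M_{1}^{\prime} \tilde{\oplus} N_{1}^{\prime} \stackrel{\mathrm{D}-\alpha \mathrm{I}}{\longrightarrow} M_{2}^{\prime} \tilde{\oplus} N_{2}^{\prime} = H_{\mathcal{A}}$ as in Definition \ref{d470}, with $N$ the closed submodule satisfying $H_{\mathcal{A}} = M_{1}^{\prime} \tilde{\oplus} N \tilde{\oplus} N_{2}^{\prime}$ and $\mathrm{P}$ the adjointable projection onto $M_{1}^{\prime} \tilde{\oplus} N$ along $N_{2}^{\prime}$. Then $\mathrm{N}(\mathrm{P}) = N_{2}^{\prime}$ is finitely generated, so $\mathrm{P} \in \mathrm{P}^{a}(H_{\mathcal{A}})$ and $\mathrm{R}(\mathrm{P}) = M_{1}^{\prime} \tilde{\oplus} N$. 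Since the matrix is diagonal with $(\mathrm{D}-\alpha \mathrm{I})_{1}$ an isomorphism, $\mathrm{D}-\alpha \mathrm{I}$ carries $M_{1}^{\prime}$ isomorphically onto $M_{2}^{\prime}$; and since $H_{\mathcal{A}} = M_{2}^{\prime} \tilde{\oplus} N_{2}^{\prime} = \mathrm{R}(\mathrm{P}) \tilde{\oplus} N_{2}^{\prime}$, the projection $\mathrm{P}$ restricted to the complement $M_{2}^{\prime}$ is an isomorphism onto $\mathrm{R}(\mathrm{P})$. Composing, and using $\mathrm{P}(\mathrm{D}-\alpha \mathrm{I})_{\mid_{\mathrm{R}(\mathrm{P})}} = (\mathrm{P}\mathrm{D}-\alpha \mathrm{I})_{\mid_{\mathrm{R}(\mathrm{P})}}$ on $\mathrm{R}(\mathrm{P})$ (valid since $\mathrm{R}(\mathrm{P})$ is an $\mathcal{A}$-submodule, so $\mathrm{P}$ fixes $x\alpha$ for $x \in \mathrm{R}(\mathrm{P})$), I get that $(\mathrm{P}\mathrm{D}-\alpha \mathrm{I})_{\mid_{\mathrm{R}(\mathrm{P})}}$ already maps $M_{1}^{\prime}$ onto $\mathrm{R}(\mathrm{P})$, hence is onto $\mathrm{R}(\mathrm{P})$.

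For the ``if'' direction, suppose $(\mathrm{P}\mathrm{D}-\alpha \mathrm{I})_{\mid_{\mathrm{R}(\mathrm{P})}}$ is onto $\mathrm{R}(\mathrm{P})$ for some $\mathrm{P} \in \mathrm{P}^{a}(H_{\mathcal{A}})$. Viewing this compression as an adjointable surjection of the Hilbert module $\mathrm{R}(\mathrm{P})$ onto itself, its range is trivially closed, so by \cite[Theorem 2.3.3]{MT} its kernel $K$ is orthogonally complementable in $\mathrm{R}(\mathrm{P})$, say $\mathrm{R}(\mathrm{P}) = K \oplus M^{\prime}$, and $(\mathrm{P}\mathrm{D}-\alpha \mathrm{I})_{\mid_{M^{\prime}}}$ is an isomorphism of $M^{\prime}$ onto $\mathrm{R}(\mathrm{P})$. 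With respect to $H_{\mathcal{A}} = M^{\prime} \tilde{\oplus} (K \tilde{\oplus} \mathrm{N}(\mathrm{P})) \stackrel{\mathrm{D}-\alpha \mathrm{I}}{\longrightarrow} \mathrm{R}(\mathrm{P}) \tilde{\oplus} \mathrm{N}(\mathrm{P}) = H_{\mathcal{A}}$, the matrix of $\mathrm{D}-\alpha \mathrm{I}$ has $(1,1)$-entry $(\mathrm{P}\mathrm{D}-\alpha \mathrm{I})_{\mid_{M^{\prime}}}$, an isomorphism; diagonalizing via \cite[Lemma 2.7.10]{MT} produces isomorphisms $\mathrm{U}$ (fixing $M^{\prime}$) and $\mathrm{V}$ (fixing $\mathrm{N}(\mathrm{P})$) making $\mathrm{D}-\alpha \mathrm{I}$ diagonal with isomorphic $(1,1)$-entry relative to $H_{\mathcal{A}} = M^{\prime} \tilde{\oplus} \mathrm{U}(K \tilde{\oplus} \mathrm{N}(\mathrm{P})) \stackrel{\mathrm{D}-\alpha \mathrm{I}}{\longrightarrow} \mathrm{V}^{-1}(\mathrm{R}(\mathrm{P})) \tilde{\oplus} \mathrm{N}(\mathrm{P}) = H_{\mathcal{A}}$. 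Putting $M_{1}^{\prime} = M^{\prime}$, $N_{1}^{\prime} = \mathrm{U}(K \tilde{\oplus} \mathrm{N}(\mathrm{P}))$, $M_{2}^{\prime} = \mathrm{V}^{-1}(\mathrm{R}(\mathrm{P}))$, $N_{2}^{\prime} = \mathrm{N}(\mathrm{P})$ and $N = K$: the submodule $N_{2}^{\prime}$ is finitely generated, $H_{\mathcal{A}} = M_{1}^{\prime} \tilde{\oplus} N \tilde{\oplus} N_{2}^{\prime}$ because $M^{\prime} \oplus K = \mathrm{R}(\mathrm{P})$, and the projection onto $M_{1}^{\prime} \tilde{\oplus} N = \mathrm{R}(\mathrm{P})$ along $N_{2}^{\prime} = \mathrm{N}(\mathrm{P})$ is exactly $\mathrm{P}$, which is adjointable. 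Hence $\mathrm{D}-\alpha \mathrm{I} \in \widehat{\mathcal{M} \Phi}_{-}^{+}(H_{\mathcal{A}})$, and the two implications together give the asserted equality.

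The main obstacle is not conceptual but a matter of careful bookkeeping in the ``if'' direction: one has to verify which summands are left fixed by $\mathrm{U}$ and $\mathrm{V}$ so that the codomain defect comes out precisely as $\mathrm{N}(\mathrm{P})$ (finitely generated, as required by Definition \ref{d470}) and so that the splitting $H_{\mathcal{A}} = M_{1}^{\prime} \tilde{\oplus} N \tilde{\oplus} N_{2}^{\prime}$ is realized by the honest orthogonal splitting $\mathrm{R}(\mathrm{P}) \oplus \mathrm{N}(\mathrm{P})$, whose projection $\mathrm{P}$ is adjointable. I would also double-check the identity $(\mathrm{P}\mathrm{D}-\alpha \mathrm{I})_{\mid_{\mathrm{R}(\mathrm{P})}} = \mathrm{P}(\mathrm{D}-\alpha \mathrm{I})_{\mid_{\mathrm{R}(\mathrm{P})}}$ and record that no finite-generation hypothesis on $K$ is needed, which is what keeps the argument lighter than that of Theorem \ref{t450}.
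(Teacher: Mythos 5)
Your proposal is correct and follows essentially the same route as the paper's own proof: both directions use the same decompositions (orthogonally complementing the kernel of the surjective compression inside $\mathrm{R}(\mathrm{P})$ via \cite[Theorem 2.3.3]{MT}, then diagonalizing with \cite[Lemma 2.7.10]{MT}, and conversely taking $\mathrm{P}$ to be the adjointable projection onto $M_{1}^{\prime}\tilde{\oplus}N$ along $N_{2}^{\prime}$ from Definition \ref{d470}). Your explicit verification of $(\mathrm{P}\mathrm{D}-\alpha\mathrm{I})_{\mid_{\mathrm{R}(\mathrm{P})}}=\mathrm{P}(\mathrm{D}-\alpha\mathrm{I})_{\mid_{\mathrm{R}(\mathrm{P})}}$ is exactly the identity the paper's final (garbled) display was meant to record, so nothing is missing.
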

\begin{proof}
	Suppose first that $\alpha \notin \bigcap \lbrace \sigma_{d}^{\mathcal{A}} (\mathrm{P}\mathrm{D}_{\mid_{\mathrm{ R}(\mathrm{P})}}) \mid \mathrm{P} \in \mathrm{P}^{a} (H_{\mathcal{A}}) \rbrace      ,$ then\\
	$(\mathrm{P}\mathrm{D}-\alpha \mathrm{I})_{\mid_{\mathrm{ R}(\mathrm{P})}}$ is onto $\mathrm{ R}(\mathrm{P})$ for some $\mathrm{P} \in \mathrm{P}^{a} (H_{\mathcal{A}}).$ Since $\mathrm{P}$ is adjointable and $\mathrm{ R}(\mathrm{P})$ is closed, by \cite[Theorem 2.3.3]{MT} $\mathrm{ R}(\mathrm{P})$ is orthogonally complementable in $H_{\mathcal{A}},$ hence $(\mathrm{P}\mathrm{D}-\alpha \mathrm{I})_{\mid_{\mathrm{ R}(\mathrm{P})}}$ can be viewed as an adjointable operator from $\mathrm{ R}(\mathrm{P})$ onto $\mathrm{ R}(\mathrm{P}).$ Then, again by \cite[Theorem 2.3.3]{MT}, $ \mathrm{N} (\mathrm{P}\mathrm{D}-\alpha \mathrm{I})_{\mid_{\mathrm{ R}(\mathrm{P})}}$ is orthogonally complementable in $\mathrm{ R}(\mathrm{P}),$ that is $\mathrm{ R}(\mathrm{P})= (\mathrm{N} (\mathrm{P}\mathrm{D}-\alpha \mathrm{I})_{\mid_{\mathrm{ R}(\mathrm{P})}}) \oplus \tilde{N}$ for some closed submodule $\tilde{N}.$ The operator $\mathrm{P}\mathrm{D}-\alpha \mathrm{I}$ is an isomorphism from $\tilde{N}$ onto $\mathrm{ R}(\mathrm{P}).$ Hence w.r.t. the decomposition 
	$$H_{\mathcal{A}}=\tilde{N} \tilde{\oplus} ( (\mathrm{N} (\mathrm{P}\mathrm{D}-\alpha \mathrm{I})_{\mid_{\mathrm{ R}(\mathrm{P})}}) \tilde{\oplus} \mathrm{N}(\mathrm{P}))\stackrel{\mathrm{D}-\alpha \mathrm{I}}{\longrightarrow}\mathrm{ R}(\mathrm{P})  \tilde{\oplus} \mathrm{N}(\mathrm{P}) =  H_{\mathcal{A}},$$ $\mathrm{D}-\alpha \mathrm{I}$ has the matrix
	$\left\lbrack
	\begin{array}{cc}
	(\mathrm{D}-\alpha \mathrm{I})_{1} & (\mathrm{D}-\alpha \mathrm{I})_{2} \\
	(\mathrm{D}-\alpha \mathrm{I})_{3} & (\mathrm{D}-\alpha \mathrm{I})_{4} \\
	\end{array}
	\right \rbrack,
	$
	where $(\mathrm{D}-\alpha \mathrm{I})_{1}$ is an isomorphism. It follows that $\mathrm{D}-\alpha \mathrm{I}$ has the matrix
	$\left\lbrack
	\begin{array}{cc}
	\overbrace{(\mathrm{D}-\alpha \mathrm{I})_{1}} & 0 \\
	0 & \overbrace{(\mathrm{D}-\alpha \mathrm{I})_{4}} \\
	\end{array}
	\right \rbrack,
	$
	w.r.t. the decomposition 
	$$H_{\mathcal{A}}=\tilde{N} \tilde{\oplus} U( (\mathrm{N} (\mathrm{P}\mathrm{D}-\alpha \mathrm{I})_{\mid_{\mathrm{ R}(\mathrm{P})}})  \tilde{\oplus} \mathrm{N}(\mathrm{P}))\stackrel{\mathrm{D}-\alpha \mathrm{I}}{\longrightarrow} \mathrm{V}^{-1}(\mathrm{ R}(\mathrm{P})) \tilde{\oplus} \mathrm{N}(\mathrm{P}) = H_{\mathcal{A}},$$
	where $\mathrm{U},\mathrm{V}$ and $\overbrace{(\mathrm{D}-\alpha \mathrm{I})_{1}} $   are isomorphisms. Set $N= \mathrm{N} (\mathrm{P}\mathrm{D}-\alpha \mathrm{I})_{\mid_{\mathrm{ R}(\mathrm{P})}} ,$\\
	$ M_{1}=\tilde{N}, M_{2}^{\prime}= \mathrm{V}^{-1}(\mathrm{ R}(\mathrm{P})), N_{1}^{\prime}=\mathrm{U}( (\mathrm{N} (\mathrm{P}\mathrm{D}-\alpha \mathrm{I})_{\mid_{\mathrm{ R}(\mathrm{P})}})  \tilde{\oplus} \mathrm{N}(\mathrm{P})),N_{2}=\mathrm{N}(\mathrm{P})$ and observe that $\mathrm{ R}(\mathrm{P})=N \oplus \tilde{N}.$ Hence $(\mathrm{D}-\alpha \mathrm{I}) \in \widehat{\mathcal{M} \Phi}_{-}^{+}(H_{\mathcal{A}}).$\\
	Conversely, let $\alpha \notin \sigma_{e \tilde{d}}^{\mathcal{A}}(\mathrm{D})$ and let 
	$$H_{\mathcal{A}} = M_{1}^{\prime} \tilde \oplus {N_{1}^{\prime}}\stackrel{\mathrm{D}-\alpha \mathrm{I}}{\longrightarrow} M_{2}^{\prime} \tilde \oplus N_{2}^{\prime}= H_{\mathcal{A}} $$ 
	be decomposition w.r.t. which $\mathrm{D}-\alpha \mathrm{I}$ has the matrix
	$\left\lbrack
	\begin{array}{cc}
	(\mathrm{D}-\alpha \mathrm{I})_{1} & 0 \\
	0 & (\mathrm{D}-\alpha \mathrm{I})_{4} \\
	\end{array}
	\right \rbrack,
	$ 
	where $(\mathrm{D}-\alpha \mathrm{I})_{1}$ is an isomorphism and such that $H_{\mathcal{A}}=M_{1}^{\prime} \tilde{\oplus} N \tilde{\oplus} N_{2}^{\prime}$ for some closed submodule $N,$ where the projection onto $M_{1}^{\prime} \tilde{\oplus} N $ along $N_{2}^{\prime}$ is adjointable. It follows that $\mathrm{P}_{\mid_{M_{2}^{\prime}}}$ is an isomorphism onto $M_{1}^{\prime} \tilde{\oplus} N,$ where $\mathrm{P}$ is the projection onto $M_{1}^{\prime} \tilde{\oplus} N $ along $N_{2}^{\prime}.$ Hence $\mathrm{P}(\mathrm{D}-\alpha \mathrm{I})_{\mid_{M_{1}^{\prime} }}$ is an isomorphism onto $M_{1}^{\prime} \tilde{\oplus} N.$ Therefore $\mathrm{P}(\mathrm{D}-\alpha \mathrm{I})_{\mid_{M_{1}^{\prime} \tilde{\oplus} N}}$ is onto $M_{1}^{\prime} \tilde{\oplus} N.$ Now $\mathrm{ R}(\mathrm{P})=M_{1}^{\prime} \tilde{\oplus} N$ and $$\mathrm{P}(\mathrm{D}-\alpha \mathrm{I})_{\mid_{M_{1}^{\prime} \tilde{\oplus} N}} = \mathrm{P}(\mathrm{D}-\alpha \mathrm{I})_{\mid_{M_{1}^{\prime} \tilde{\oplus} N}}.$$
\end{proof}
\begin{flushright}
	$\boxdot$
\end{flushright}
Similarly as for $ \mathcal{M}{\tilde{\Phi}_{0}}(H_{\mathcal{A}})  $ and $\widehat{\mathcal{M}\Phi}_{+}^{-}(H_{\mathcal{A}})  $, one can show that $\widehat{\mathcal{M}\Phi}_{-}^{+}(H_{\mathcal{A}})$ is open.
Indeed, let $D \in \widehat{\mathcal{M} \Phi}_{-}^{+}(H_{\mathcal{A}}),$ choose an $\widehat{\mathcal{M} \Phi}_{-}^{+} $ decomposition $H_{\mathcal{A}} = M_{1}^{\prime} \tilde \oplus {N_{1}^{\prime}}\stackrel{D}{\longrightarrow} M_{2}^{\prime} \tilde \oplus N_{2}^{\prime}= H_{\mathcal{A}}   $ for $D.$ Let $N$ be a closed submodule s.t. $H_{\mathcal{A}} =M_{1}^{\prime} \tilde \oplus N \tilde \oplus N_{2}^{\prime}   $ and s.t. the projection onto $M_{1}^{\prime} \tilde \oplus N  $ along $N_{2}^{\prime}   $ is adjointable. By the proof of  \cite[Lemma 2.7.10]{MT}, there exists an $\epsilon >0  $ s.t. if $ \parallel G-D \parallel < \epsilon $ for an operator $G \in B^{a}(H_{\mathcal{A}}).$ then G has the matrix
$\left\lbrack
\begin{array}{cc}
G_{1} & 0 \\
0 & G_{4} \\
\end{array}
\right \rbrack,
$
w.r.t. the decomposition $H_{\mathcal{A}} = M_{1}^{\prime} \tilde \oplus U({N_{1}^{\prime}})\stackrel{G}{\longrightarrow} V^{-1}(M_{2}^{\prime}) \tilde \oplus N_{2}^{\prime}= H_{\mathcal{A}},$ where $U, V, G_{1}$ are isomorphisms. It follows that $G \in \widehat{\mathcal{M} \Phi}_{-}^{+}(H_{\mathcal{A}}).$\\
If $\mathcal{A} =\mathbb{\mathrm{C}} $, that is if $H_{\mathcal{A}}=H$ is an ordinary Hilbert space, then $\mathcal{M}{\tilde{\Phi}_{0}}(H)={{\Phi}_{0}}(H)$, $\widehat{\mathcal{M}\Phi}_{+}^{-}(H)=\Phi_{+}^{-}(H) $ and $\widehat{\mathcal{M}\Phi}_{-}^{+}(H)=\Phi_{-}^{+}(H) .$
In addition, observe that $\widehat{\mathcal{M}\Phi}_{+}^{-}(H_{\mathcal{A}}) \subseteq {\mathcal{M}\Phi}_{+}^{-}(H_{\mathcal{A}}) $ and  $\widehat{\mathcal{M}\Phi}_{-}^{+}(H_{\mathcal{A}}) \subseteq {\mathcal{M}\Phi}_{-}^{+}(H_{\mathcal{A}}) .$
\section{The boundary of several kinds of Fredholm spectra in $Z(\mathcal{A})$}
Recall first 	\cite[Definition5.1]{I} and Definition \ref{d510} in Preliminaries. We give then the following definition:
\begin{definition} \label{d510} 
	Let $\mathrm{F} \in B^{a}(H_{\mathcal{A}}).$ We set
	$$\sigma_{ew}^{\mathcal{A}} (\mathrm{F})=\lbrace \alpha \in Z(\mathcal{A}) \mid (\mathrm{F}-\alpha \mathrm{I}) \notin \mathcal{M} \Phi_{0}(H_{\mathcal{A}}) \rbrace,$$
	$$\sigma_{e \alpha}^{\mathcal{A}} (\mathrm{F})=\lbrace \alpha \in Z(\mathcal{A}) \mid (\mathrm{F}-\alpha \mathrm{I}) \notin \mathcal{M} \Phi_{+}(H_{\mathcal{A}}) \rbrace,$$
	$$\sigma_{e \beta}^{\mathcal{A}} (\mathrm{F})=\lbrace \alpha \in Z(\mathcal{A}) \mid (\mathrm{F}-\alpha \mathrm{I}) \notin \mathcal{M} \Phi_{-}(H_{\mathcal{A}}) \rbrace,$$
	$$\sigma_{e k}^{\mathcal{A}} (\mathrm{F})=\lbrace \alpha \in Z(\mathcal{A}) \mid (\mathrm{F}-\alpha \mathrm{I}) \notin \mathcal{M} \Phi_{+}(H_{\mathcal{A}}) \cup \mathcal{M} \Phi_{-}(H_{\mathcal{A}}) \rbrace,$$
	$$\sigma_{e f}^{\mathcal{A}} (\mathrm{F})=\lbrace \alpha \in Z(\mathcal{A}) \mid (\mathrm{F}-\alpha \mathrm{I}) \notin \mathcal{M} \Phi(H_{\mathcal{A}}) \rbrace.$$ 
\end{definition}
\begin{theorem} \label{t520} 
	Let  $ \mathrm{F} \in B^{a}(H_{\mathcal{A}})   .$ Then the following inclusions hold:\\
	\begin{center}
		$ \partial \sigma_{ew}^{\mathcal{A}} (\mathrm{F}) \subseteq   \partial \sigma_{ef}^{\mathcal{A}} (\mathrm{F}) \subseteq $
		$	
		\begin{array}{l}
		{\partial \sigma_{e\beta}^{\mathcal{A}} (\mathrm{F})} \\
		{\partial \sigma_{e\alpha}^{\mathcal{A}} (\mathrm{F})} \\
		\end{array}
		$ $ \subseteq  \partial \sigma_{ek}^{\mathcal{A}} (\mathrm{F}) .$
		
	\end{center}
\end{theorem}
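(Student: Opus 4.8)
The plan is to deduce all five inclusions from a single topological fact together with the openness of three explicit differences of operator classes. The fact I would record first is: \emph{if $A\subseteq B$ are subsets of a topological space, $B$ is closed and $B\setminus A$ is open, then $\partial B\subseteq\partial A$.} Indeed, let $x\in\partial B$; since $B$ is closed, $\partial B=B\setminus\mathrm{int}\,B$ and $\partial B\subseteq\overline{B^{c}}\subseteq\overline{A^{c}}$ because $A\subseteq B$; moreover, if $x\notin\overline{A}$ then $x\notin A$, so $x\in B\setminus A\subseteq\mathrm{int}\,B$ (as $B\setminus A$ is open), contradicting $x\in\partial B$; hence $x\in\overline{A}$, and therefore $x\in\overline{A}\cap\overline{A^{c}}=\partial A$. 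This is to be applied inside $Z(\mathcal{A})$, which is a norm-closed subalgebra of $\mathcal{A}$ and on which $\alpha\mapsto\mathrm{F}-\alpha\mathrm{I}$ is continuous (in fact isometric) into $B^{a}(H_{\mathcal{A}})$; in particular each of the five spectra in Definition \ref{d510} is closed, being the preimage under $\alpha\mapsto\mathrm{F}-\alpha\mathrm{I}$ of the complement of one of the open classes $\mathcal{M}\Phi_{0}(H_{\mathcal{A}})$, $\mathcal{M}\Phi(H_{\mathcal{A}})$, $\mathcal{M}\Phi_{+}(H_{\mathcal{A}})$, $\mathcal{M}\Phi_{-}(H_{\mathcal{A}})$, $\mathcal{M}\Phi_{+}(H_{\mathcal{A}})\cup\mathcal{M}\Phi_{-}(H_{\mathcal{A}})$ (openness of $\mathcal{M}\Phi_{+}$, $\mathcal{M}\Phi_{-}$ and $\mathcal{M}\Phi$ being known from \cite{I}, and that of $\mathcal{M}\Phi_{0}$ being explained below).

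Next I would do the bookkeeping. Using $\mathcal{M}\Phi_{0}(H_{\mathcal{A}})\subseteq\mathcal{M}\Phi(H_{\mathcal{A}})\subseteq\mathcal{M}\Phi_{+}(H_{\mathcal{A}}),\,\mathcal{M}\Phi_{-}(H_{\mathcal{A}})\subseteq\mathcal{M}\Phi_{+}(H_{\mathcal{A}})\cup\mathcal{M}\Phi_{-}(H_{\mathcal{A}})$ together with the identity $\mathcal{M}\Phi(H_{\mathcal{A}})=\mathcal{M}\Phi_{+}(H_{\mathcal{A}})\cap\mathcal{M}\Phi_{-}(H_{\mathcal{A}})$ (the analogue of $\Phi=\Phi_{+}\cap\Phi_{-}$, from \cite{I}), one checks directly that, for every $\mathrm{F}$, $\sigma_{ef}^{\mathcal{A}}(\mathrm{F})\subseteq\sigma_{ew}^{\mathcal{A}}(\mathrm{F})$, that $\sigma_{e\alpha}^{\mathcal{A}}(\mathrm{F})$ and $\sigma_{e\beta}^{\mathcal{A}}(\mathrm{F})$ are contained in $\sigma_{ef}^{\mathcal{A}}(\mathrm{F})$, that $\sigma_{ek}^{\mathcal{A}}(\mathrm{F})$ is contained in both, and moreover
$$\sigma_{ew}^{\mathcal{A}}(\mathrm{F})\setminus\sigma_{ef}^{\mathcal{A}}(\mathrm{F})=\{\alpha\in Z(\mathcal{A})\mid\mathrm{F}-\alpha\mathrm{I}\in\mathcal{M}\Phi(H_{\mathcal{A}})\setminus\mathcal{M}\Phi_{0}(H_{\mathcal{A}})\},$$
$$\sigma_{ef}^{\mathcal{A}}(\mathrm{F})\setminus\sigma_{e\alpha}^{\mathcal{A}}(\mathrm{F})=\sigma_{e\beta}^{\mathcal{A}}(\mathrm{F})\setminus\sigma_{ek}^{\mathcal{A}}(\mathrm{F})=\{\alpha\in Z(\mathcal{A})\mid\mathrm{F}-\alpha\mathrm{I}\in\mathcal{M}\Phi_{+}(H_{\mathcal{A}})\setminus\mathcal{M}\Phi(H_{\mathcal{A}})\},$$
$$\sigma_{ef}^{\mathcal{A}}(\mathrm{F})\setminus\sigma_{e\beta}^{\mathcal{A}}(\mathrm{F})=\sigma_{e\alpha}^{\mathcal{A}}(\mathrm{F})\setminus\sigma_{ek}^{\mathcal{A}}(\mathrm{F})=\{\alpha\in Z(\mathcal{A})\mid\mathrm{F}-\alpha\mathrm{I}\in\mathcal{M}\Phi_{-}(H_{\mathcal{A}})\setminus\mathcal{M}\Phi(H_{\mathcal{A}})\}.$$
Since $\alpha\mapsto\mathrm{F}-\alpha\mathrm{I}$ is continuous, each of these differences is open as soon as $\mathcal{M}\Phi\setminus\mathcal{M}\Phi_{0}$, $\mathcal{M}\Phi_{+}\setminus\mathcal{M}\Phi$ and $\mathcal{M}\Phi_{-}\setminus\mathcal{M}\Phi$ are open in $B^{a}(H_{\mathcal{A}})$; the topological fact from the first paragraph, applied to the pairs $(\sigma_{ef}^{\mathcal{A}},\sigma_{ew}^{\mathcal{A}})$, $(\sigma_{e\alpha}^{\mathcal{A}},\sigma_{ef}^{\mathcal{A}})$, $(\sigma_{e\beta}^{\mathcal{A}},\sigma_{ef}^{\mathcal{A}})$, $(\sigma_{ek}^{\mathcal{A}},\sigma_{e\alpha}^{\mathcal{A}})$, $(\sigma_{ek}^{\mathcal{A}},\sigma_{e\beta}^{\mathcal{A}})$, then yields precisely the five inclusions. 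The first of the three openness statements is easy: by the diagonalization in the proof of \cite[Lemma 2.7.10]{MT}, a norm-small perturbation of an operator with an $\mathcal{M}\Phi$-decomposition $M_{1}\tilde\oplus N_{1}\to M_{2}\tilde\oplus N_{2}$ again has such a decomposition with $N_{1},N_{2}$ replaced by isomorphic submodules, so the index is locally constant on $\mathcal{M}\Phi(H_{\mathcal{A}})$; hence $\mathcal{M}\Phi_{0}(H_{\mathcal{A}})$ and $\mathcal{M}\Phi(H_{\mathcal{A}})\setminus\mathcal{M}\Phi_{0}(H_{\mathcal{A}})$ are both relatively open in $\mathcal{M}\Phi(H_{\mathcal{A}})$, hence open in $B^{a}(H_{\mathcal{A}})$ (which also justifies the use of openness of $\mathcal{M}\Phi_{0}$ above).

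The remaining, and main, difficulty is the openness of $\mathcal{M}\Phi_{+}(H_{\mathcal{A}})\setminus\mathcal{M}\Phi(H_{\mathcal{A}})$ and $\mathcal{M}\Phi_{-}(H_{\mathcal{A}})\setminus\mathcal{M}\Phi(H_{\mathcal{A}})$ — the $C^{*}$-module counterpart of the classical stability of semi-Fredholm operators with infinite-dimensional defect. For the first, given $\mathrm{F}\in\mathcal{M}\Phi_{+}(H_{\mathcal{A}})\setminus\mathcal{M}\Phi(H_{\mathcal{A}})$, fix an upper semi-$\mathcal{A}$-Fredholm decomposition $H_{\mathcal{A}}=M_{1}\tilde\oplus N_{1}\to M_{2}\tilde\oplus N_{2}$ with $N_{1}$ finitely generated. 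Arguing as in the proof of \cite[Lemma 2.7.10]{MT}, any $\mathrm{D}$ with $\|\mathrm{D}-\mathrm{F}\|$ sufficiently small has a diagonal matrix with invertible upper corner with respect to a decomposition of the form $M_{1}\tilde\oplus\mathrm{U}(N_{1})\to\mathrm{V}^{-1}(M_{2})\tilde\oplus N_{2}$ — with the \emph{same} tail $N_{2}$ and with $\mathrm{U}(N_{1})$ finitely generated — so $\mathrm{D}\in\mathcal{M}\Phi_{+}(H_{\mathcal{A}})$; and if $\mathrm{D}$ also belonged to $\mathcal{M}\Phi(H_{\mathcal{A}})$, the structure results on semi-$\mathcal{A}$-Fredholm decompositions in \cite{I} would force $N_{2}$ to be finitely generated, contradicting $\mathrm{F}\notin\mathcal{M}\Phi(H_{\mathcal{A}})$. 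The case of $\mathcal{M}\Phi_{-}\setminus\mathcal{M}\Phi$ is symmetric, or follows by passing to adjoints. The genuinely delicate step here is the last implication "$\mathrm{D}\in\mathcal{M}\Phi(H_{\mathcal{A}})\Rightarrow N_{2}$ finitely generated": a priori $\mathrm{D}$ could be $\mathcal{A}$-Fredholm via an entirely different pair of submodules, and excluding this requires knowing that the cokernel module of a semi-$\mathcal{A}$-Fredholm operator is determined up to stable isomorphism by the operator. If that is not directly available from \cite{I}, it should be isolated first as a separate lemma — the $H_{\mathcal{A}}$-analogue of the punctured-neighbourhood theorem — and this is where the real work of the proof lies.
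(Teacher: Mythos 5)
Your proposal is correct and is in substance the same proof as the paper's: the paper also reduces each inclusion $\partial S'\subseteq\partial S$ to $\partial S'\subseteq S$ (it records exactly your topological fact, in the equivalent form: if $S\subseteq S'$ and $\partial S'\subseteq S$, then $\partial S'\subseteq\partial S$), it proves $\partial\sigma_{ew}^{\mathcal{A}}(\mathrm{F})\subseteq\sigma_{ef}^{\mathcal{A}}(\mathrm{F})$ by the same local constancy of the index coming from the diagonalization in \cite[Lemma 2.7.10]{MT}, and it obtains $\partial\sigma_{ef}^{\mathcal{A}}(\mathrm{F})\subseteq\sigma_{ek}^{\mathcal{A}}(\mathrm{F})$ by citing \cite[Corollary 4.2]{I}, which is precisely the statement you isolate (no point of $\partial\mathcal{M}\Phi(H_{\mathcal{A}})$ lies in $\mathcal{M}\Phi_{+}(H_{\mathcal{A}})\cup\mathcal{M}\Phi_{-}(H_{\mathcal{A}})$, equivalently $\mathcal{M}\Phi_{\pm}(H_{\mathcal{A}})\setminus\mathcal{M}\Phi(H_{\mathcal{A}})$ is open). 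The only organisational difference is in the last two inclusions: where you would quote openness of $\mathcal{M}\Phi_{\pm}\setminus\mathcal{M}\Phi$ once more, the paper writes the corresponding perturbation argument out inline: for $\tilde\alpha\in\partial\sigma_{e\alpha}^{\mathcal{A}}(\mathrm{F})$ with $\tilde\alpha\notin\sigma_{e\beta}^{\mathcal{A}}(\mathrm{F})$ it perturbs to a nearby $\tilde\alpha'$ with $\mathrm{F}-\tilde\alpha'\mathrm{I}\in\mathcal{M}\Phi_{+}\cap\mathcal{M}\Phi_{-}=\mathcal{M}\Phi$ while retaining a diagonal decomposition with the same finitely generated $N_{2}$, and then invokes \cite[Lemma 2.17]{I} to conclude that $N_{1}$ is finitely generated, whence $\mathrm{F}-\tilde\alpha\mathrm{I}\in\mathcal{M}\Phi_{+}$, a contradiction. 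That lemma is exactly the ``delicate step'' you flag at the end (an operator in $\mathcal{M}\Phi$ admitting a diagonal decomposition with one corner complement finitely generated forces the other complement to be finitely generated), and the openness you defer is available as \cite[Theorem 4.1]{I}, which the paper itself quotes in the proof of the next theorem; so no new punctured-neighbourhood lemma needs to be established, and your argument closes completely with those two citations.
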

(We consider the boundaries in the $\mathrm{C}^{*}$-algebra $Z(\mathcal{A})$).\\
\begin{proof}
	We will show this by proving the following inclusions: 
	$$  \partial \sigma_{ew}^{\mathcal{A}} (\mathrm{F}) \subseteq   \sigma_{ef}^{\mathcal{A}} (\mathrm{F}),$$
	$$\partial \sigma_{ef}^{\mathcal{A}} (\mathrm{F}) \subseteq ( \sigma_{e\alpha}^{\mathcal{A}} (\mathrm{F}) \cap \sigma_{e\beta}^{\mathcal{A}} (\mathrm{F}))=\sigma_{ek}^{\mathcal{A}} (\mathrm{F}) ,$$
	$$ \partial \sigma_{e\alpha}^{\mathcal{A}} (\mathrm{F}) \subseteq  \sigma_{ek}^{\mathcal{A}} (\mathrm{F}) \textrm{ and } \partial \sigma_{e\beta}^{\mathcal{A}} (\mathrm{F}) \subseteq  \sigma_{ek}^{\mathcal{A}} (\mathrm{F}).$$
	Since obviously 
	\begin{center}
		$ \partial \sigma_{ek}^{\mathcal{A}} (\mathrm{F}) \subseteq   $
		$	
		\begin{array}{l}
		{ \sigma_{e\alpha}^{\mathcal{A}} (\mathrm{F})} \\
		{ \sigma_{e\beta}^{\mathcal{A}} (\mathrm{F})} \\
		\end{array}
		$ $ \subseteq \sigma_{ef}^{\mathcal{A}} (\mathrm{F}) \subseteq \sigma_{ew}^{\mathcal{A}} (\mathrm{F}) ,$
	\end{center}
	if we prove the inclusions above, the theorem would follow. Here we use the property that if $S,S^{\prime}  \subseteq  Z(\mathcal{A}) \textrm{ , } S \subseteq S^{\prime} \textrm{ and } \partial  S^{\prime} \subseteq S  \textrm{ , then }  \partial  S^{\prime} \subseteq \partial S.$ The first inclusion follows by the same arguments as in the classical case (the proof of \cite[Theorem 2.2.2.3]{ZZRD}) since  $ \sigma_{ew}^{\mathcal{A}} (\mathrm{F})\setminus   \sigma_{ef}^{\mathcal{A}} (\mathrm{F})    $ is open in  $ Z(\mathcal{A}) $  by the continuily of index, which follows from \cite[Lemma 2.7.10]{MT}. Next, if  $\alpha \in  \partial \sigma_{ef}^{\mathcal{A}} (\mathrm{F})  , $  then obviously  $\mathrm{F}-\alpha \mathrm{I}   $  is in  $ \partial \mathcal{M} \Phi (H_{\mathcal{A}})   .$ Using \cite[Corollary 4.2]{I} we deduce that  $(\mathrm{F}-\alpha \mathrm{I}) \notin  \mathcal{M} \Phi_{+}(H_{\mathcal{A}}) \cup \mathcal{M} \Phi_{-}(H_{\mathcal{A}})  .$ This works as in the proof of \cite[2.2.2.4]{ZZRD}  and \cite[2.2.2.5]{ZZRD}. Hence
	$$\partial \sigma_{ef}^{\mathcal{A}} (\mathrm{F}) \subseteq   ({ \sigma_{e\alpha}^{\mathcal{A}} (\mathrm{F})} \cap { \sigma_{e\beta}^{\mathcal{A}} (\mathrm{F})}) $$
	Suppose now that  $\tilde{\alpha} \in \partial \sigma_{e\alpha} (\mathrm{F})   .$ If  $\tilde{\alpha} \notin  \sigma_{e \beta }^{\mathcal{A}} (\mathrm{F}), $  then  $ (\mathrm{F}-\tilde \alpha \mathrm{I}) \in  \mathcal{M} \Phi_{-}(H_{\mathcal{A}})   ,$  so there exsists a decomposition  
	$$ H_{\mathcal{A}} = M_{1} \tilde \oplus {N_{1}}\stackrel{\mathrm{F}-\tilde{\alpha} \mathrm{I}}{\longrightarrow}  M_{2} \tilde \oplus N_{2}= H_{\mathcal{A}} $$  
	w.r.t. which  $\mathrm{F}-\tilde\alpha \mathrm{I}   $  has the matrix
	$\left\lbrack
	\begin{array}{cc}
	(\mathrm{F}-\tilde \alpha \mathrm{I})_{1} & 0 \\
	0 & (\mathrm{F}-\tilde \alpha \mathrm{I})_{4} \\
	\end{array}
	\right \rbrack
	,$ 
	where  $(\mathrm{F}-\tilde \alpha \mathrm{I})_{1},$  is an isomorphism and  $ N_{2}  $  is finitely generated. \\
	By the proof of \cite[Lemma 2.7.10]{MT} there exists some  $ \epsilon > 0   $  such that if  $ \tilde{\alpha} \in \mathcal{A}  $  and  $||\tilde \alpha - \tilde \alpha^{\prime} || < \epsilon  ,$  then  $\mathrm{F}-\tilde \alpha^{\prime} \mathrm{I}  $  has the matrix
	$\left\lbrack
	\begin{array}{cc}
	(\mathrm{F}-\tilde \alpha^{\prime} \mathrm{I})_{1} & 0 \\
	0 & (\mathrm{F}-\tilde \alpha^{\prime} \mathrm{I})_{4} \\
	\end{array}
	\right \rbrack
	$  
	w.r.t. the decomposition  
	$$ H_{\mathcal{A}} = M_{1} \tilde \oplus \mathrm{U}({N_{1}})\stackrel{\mathrm{F}- \tilde \alpha^{\prime} \mathrm{I}}{\longrightarrow} \mathrm{V}^{-1}(M_{2}) \tilde \oplus N_{2}= H_{\mathcal{A}} $$  
	where  $(\mathrm{F}-\tilde \alpha^{\prime} \mathrm{I})_{1}, \mathrm{U} ,\mathrm{V} $  are isomorphisms, so
	$(\mathrm{F}- \tilde \alpha^{\prime} \mathrm{I}) \in \mathcal{M} \Phi_{-}(H_{\mathcal{A}})  $ in this case. But, since  $\tilde \alpha \in  \partial \sigma_{e\alpha}^{\mathcal{A}} (\mathrm{F})   ,$ we may choose  $\tilde \alpha^{\prime} \in \mathcal{A}   $  such that  $||\tilde \alpha - \tilde \alpha^{\prime} || < \epsilon   $  and in addition  $(\mathrm{F}-\tilde{ \alpha^{\prime}}\mathrm{I}) \in \mathcal{M} \Phi_{+}(H_{\mathcal{A}})   .$ Thus  $(\mathrm{F}-\tilde \alpha^{\prime} \mathrm{I}) \in \mathcal{M} \Phi_{+}(H_{\mathcal{A}}) \cap  \mathcal{M} \Phi_{-}(H_{\mathcal{A}})  $  and from \cite[Corollary 2.4]{I}, we have  $\mathcal{M} \Phi_{+}(H_{\mathcal{A}}) \cap  \mathcal{M} \Phi_{-}(H_{\mathcal{A}}) = \mathcal{M} \Phi(H_{\mathcal{A}})    ,$ so \\
	$(\mathrm{F}-\tilde \alpha^{\prime} \mathrm{I}) \in \mathcal{M} \Phi(H_{\mathcal{A}}) .$ Since  $\mathrm{F}-\tilde \alpha^{\prime} \mathrm{I}   $  has the matrix 
	$\left\lbrack
	\begin{array}{cc}
	(\mathrm{F}-\tilde \alpha^{\prime} \mathrm{I})_{1} & 0 \\
	0 & (\mathrm{F}-\tilde \alpha^{\prime} \mathrm{I})_{4} \\
	\end{array}
	\right \rbrack
	$  
	w.r.t. the decomposition
	$$ H_{\mathcal{A}} = M_{1} \tilde \oplus \mathrm{U}({N_{1}})\stackrel{\mathrm{F}- \tilde \alpha^{\prime} \mathrm{I}}{\longrightarrow} \mathrm{V}^{-1}(M_{2}) \tilde \oplus N_{2}= H_{\mathcal{A}} ,$$   
	where  $(\mathrm{F}-\tilde \alpha^{\prime} \mathrm{I})_{1}  ,$  $\mathrm{V}$ are isomorphisms and  $ N_{2}  $  is finitely generated, by\\
	\cite[Lemma 2.17]{I} we must have that  $ \mathrm{U}({N_{1}})  $  is finitely generated, as\\
	$(\mathrm{F}-\tilde \alpha^{\prime} \mathrm{I}) \in \mathcal{M} \Phi(H_{\mathcal{A}}).$ Hence  $N_{1}   $  is finitely generated, so
	$(\mathrm{F}-\tilde \alpha \mathrm{I}) \in \mathcal{M} \Phi(H_{\mathcal{A}})    .$ In particular  $ (\mathrm{F}-\tilde \alpha \mathrm{I}) \in \mathcal{M} \Phi_{+}(H_{\mathcal{A}}),$  which contradicts the choice of  $\tilde \alpha \in \partial \sigma_{e \alpha}^{\mathcal{A}} (\mathrm{F})   .$ Thus  $\tilde \alpha \in  \sigma_{e \alpha}^{\mathcal{A}} (\mathrm{F})    ,$ so  $\tilde \alpha \in \sigma_{e \alpha}^{\mathcal{A}} (\mathrm{F}) \cap \sigma_{e \beta}^{\mathcal{A}} (\mathrm{F}) = \sigma_{e k}^{\mathcal{A}} (\mathrm{F}).    $
	Similarly, we can show that  $ \partial \sigma_{e \beta }^{\mathcal{A}} (\mathrm{F}) \subseteq \sigma_{e k}^{\mathcal{A}} (\mathrm{F}) .$
\end{proof} 
\begin{flushright}
	$\boxdot$
\end{flushright}
Next we consider the following spectra for  $\mathrm{F}\in B^{a}(H_{\mathcal{A}})   :$
$$\sigma_{e\tilde a}^{\mathcal{A}} (\mathrm{F})=\lbrace \alpha \in Z(\mathcal{A}) \mid (\mathrm{F}-\alpha \mathrm{I}) \notin \tilde{\mathcal{M} \Phi}_{+}^{-}(H_{\mathcal{A}}) \rbrace$$
$$\sigma_{ea}^{\mathcal{A}} (\mathrm{F})=\lbrace  \alpha \in Z(\mathcal{A}) \mid (\mathrm{F}-\alpha \mathrm{I}) \notin \mathcal{M} \Phi_{+}^{-}(H_{\mathcal{A}}) \rbrace $$
Clearly,  $\sigma_{ea}^{\mathcal{A}} (\mathrm{F}) \subseteq \sigma_{e a^{\prime}}^{\mathcal{A}} (\mathrm{F}) \subseteq \sigma_{e\tilde a}^{\mathcal{A}} (\mathrm{F}) $ . We have the following theorem.
\begin{theorem} \label{T530}
	Let  $\mathrm{F} \in B^{a}(H_{\mathcal{A}})   .$ Then  
	$$ \partial \sigma_{ew}^{\mathcal{A}} (\mathrm{F}) \subseteq  \partial \sigma_{e\tilde{a}}^{\mathcal{A}} (\mathrm{F}) \subseteq   \partial \sigma_{e{a}}^{\mathcal{A}} (\mathrm{F})$$ 
	Moreover, $ \partial \sigma_{e{a}}^{\mathcal{A}} (\mathrm{F}) \subseteq   \partial \sigma_{e\alpha}^{\mathcal{A}} (\mathrm{F}) $ if $K({\mathcal{A}})$ satisfies the cancellation property.
\end{theorem}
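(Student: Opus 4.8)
The plan is to derive all three inclusions from the same elementary topological principle already exploited in the proof of Theorem~\ref{t520}: if $S\subseteq S'$ are subsets of $Z(\mathcal{A})$ with $\partial S'\subseteq S$, then $\partial S'\subseteq\partial S$. I would apply it to the pairs $(S,S')=(\sigma_{e\tilde a}^{\mathcal{A}}(\mathrm{F}),\sigma_{ew}^{\mathcal{A}}(\mathrm{F}))$, $(\sigma_{ea}^{\mathcal{A}}(\mathrm{F}),\sigma_{e\tilde a}^{\mathcal{A}}(\mathrm{F}))$ and, for the last assertion, $(\sigma_{e\alpha}^{\mathcal{A}}(\mathrm{F}),\sigma_{ea}^{\mathcal{A}}(\mathrm{F}))$. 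In each case this reduces the claim to two facts: the ambient set inclusion $S\subseteq S'$, and the statement that the boundary of the larger spectrum is contained in the smaller one.

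The set inclusions form the chain $\sigma_{e\alpha}^{\mathcal{A}}(\mathrm{F})\subseteq\sigma_{ea}^{\mathcal{A}}(\mathrm{F})\subseteq\sigma_{e\tilde a}^{\mathcal{A}}(\mathrm{F})\subseteq\sigma_{ew}^{\mathcal{A}}(\mathrm{F})$. The first holds because $\mathcal{M}\Phi_{+}^{-}(H_{\mathcal{A}})\subseteq\mathcal{M}\Phi_{+}(H_{\mathcal{A}})$ straight from Definition~\ref{d280n}; the middle one is $\tilde{\mathcal{M}\Phi}_{+}^{-}(H_{\mathcal{A}})\subseteq\mathcal{M}\Phi_{+}^{-}(H_{\mathcal{A}})$, already noted in the text; and the last is equivalent to $\mathcal{M}\Phi_{0}(H_{\mathcal{A}})\subseteq\tilde{\mathcal{M}\Phi}_{+}^{-}(H_{\mathcal{A}})$, which I would obtain by starting from an $\mathcal{M}\Phi$-decomposition $H_{\mathcal{A}}=M_{1}\tilde\oplus N_{1}\to M_{2}\tilde\oplus N_{2}$ of an index-zero operator, peeling a common finitely generated summand off the isomorphism $\mathrm{F}_{1}$ and enlarging $N_{1}$ and $N_{2}$ by it so that the two tails become isomorphic; this yields a decomposition with $N_{1}\preceq N_{2}$.

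For the "boundary into the smaller set" statements I would argue by contraposition, the main ingredients being: local constancy of the Mishchenko--Fomenko index, which follows from \cite[Lemma 2.7.10]{MT}; openness of $\mathcal{M}\Phi(H_{\mathcal{A}})$, $\mathcal{M}\Phi_{0}(H_{\mathcal{A}})$, $\tilde{\mathcal{M}\Phi}_{+}^{-}(H_{\mathcal{A}})$ and $\mathcal{M}\Phi_{+}^{-}(H_{\mathcal{A}})$ (the last two from \cite{I}; see also Lemma~\ref{l440}); and the fact that an operator in $\mathcal{M}\Phi_{+}(H_{\mathcal{A}})\setminus\mathcal{M}\Phi(H_{\mathcal{A}})$ admits no Fredholm operator in a norm neighbourhood, via the perturbation lemma and \cite[Lemma 2.17]{I} (compare \cite[Corollary 2.4]{I}, \cite[Corollary 4.2]{I}). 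Thus for $\partial\sigma_{ew}^{\mathcal{A}}(\mathrm{F})\subseteq\sigma_{e\tilde a}^{\mathcal{A}}(\mathrm{F})$: if $\mathrm{F}-\alpha\mathrm{I}\in\tilde{\mathcal{M}\Phi}_{+}^{-}(H_{\mathcal{A}})\subseteq\mathcal{M}\Phi(H_{\mathcal{A}})$ and its index vanishes, then $\mathrm{F}-\alpha\mathrm{I}$ lies in the open set $\mathcal{M}\Phi_{0}(H_{\mathcal{A}})$ and $\alpha$ is interior to the complement of $\sigma_{ew}^{\mathcal{A}}(\mathrm{F})$; if the index is nonzero, local constancy puts a whole neighbourhood of $\alpha$ into $\sigma_{ew}^{\mathcal{A}}(\mathrm{F})$; in either case $\alpha\notin\partial\sigma_{ew}^{\mathcal{A}}(\mathrm{F})$. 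The inclusion $\partial\sigma_{e\tilde a}^{\mathcal{A}}(\mathrm{F})\subseteq\sigma_{ea}^{\mathcal{A}}(\mathrm{F})$ is handled the same way, splitting into the cases $\mathrm{F}-\alpha\mathrm{I}\in\mathcal{M}\Phi(H_{\mathcal{A}})$ and $\mathrm{F}-\alpha\mathrm{I}\in\mathcal{M}\Phi_{+}(H_{\mathcal{A}})\setminus\mathcal{M}\Phi(H_{\mathcal{A}})$.

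For the final inclusion $\partial\sigma_{ea}^{\mathcal{A}}(\mathrm{F})\subseteq\sigma_{e\alpha}^{\mathcal{A}}(\mathrm{F})$, take $\alpha\in\partial\sigma_{ea}^{\mathcal{A}}(\mathrm{F})$; since $\mathcal{M}\Phi_{+}^{-}(H_{\mathcal{A}})$ is open, $\alpha\in\sigma_{ea}^{\mathcal{A}}(\mathrm{F})$. If $\mathrm{F}-\alpha\mathrm{I}\in\mathcal{M}\Phi_{+}(H_{\mathcal{A}})\setminus\mathcal{M}\Phi(H_{\mathcal{A}})$, a neighbourhood of $\alpha$ stays in $\mathcal{M}\Phi_{+}^{-}(H_{\mathcal{A}})$, contradicting $\alpha\in\partial\sigma_{ea}^{\mathcal{A}}(\mathrm{F})$. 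If $\mathrm{F}-\alpha\mathrm{I}\in\mathcal{M}\Phi(H_{\mathcal{A}})$, choose $\beta$ near $\alpha$ with $\mathrm{F}-\beta\mathrm{I}\in\mathcal{M}\Phi_{+}^{-}(H_{\mathcal{A}})$; since $\mathcal{M}\Phi(H_{\mathcal{A}})$ is open this forces $\mathrm{F}-\beta\mathrm{I}\in\tilde{\mathcal{M}\Phi}_{+}^{-}(H_{\mathcal{A}})$, so a decomposition of $\mathrm{F}-\beta\mathrm{I}$ has tails $N_{1}'\preceq N_{2}'$, and by local constancy $[N_{1}']-[N_{2}']$ equals the index $[N_{1}]-[N_{2}]$ of $\mathrm{F}-\alpha\mathrm{I}$; the cancellation property then upgrades $N_{1}'\preceq N_{2}'$ to $N_{1}\preceq N_{2}$, so $\mathrm{F}-\alpha\mathrm{I}\in\tilde{\mathcal{M}\Phi}_{+}^{-}(H_{\mathcal{A}})\subseteq\mathcal{M}\Phi_{+}^{-}(H_{\mathcal{A}})$, contradicting $\alpha\in\sigma_{ea}^{\mathcal{A}}(\mathrm{F})$. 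Hence $\mathrm{F}-\alpha\mathrm{I}\notin\mathcal{M}\Phi_{+}(H_{\mathcal{A}})$, i.e.\ $\alpha\in\sigma_{e\alpha}^{\mathcal{A}}(\mathrm{F})$. I expect this last step to be the crux of the whole argument: local constancy only delivers an equality of indices in $K(\mathcal{A})$, and converting such an equality (together with $N_{1}'\preceq N_{2}'$) into a genuine module embedding $N_{1}\preceq N_{2}$ is precisely what fails without the cancellation property; a lesser but real technical burden, familiar from the proofs of Theorems~\ref{t430} and \ref{t450}, is reconciling the two decompositions of $\mathrm{F}-\alpha\mathrm{I}$ in play — one coming from a norm perturbation, the other witnessing membership in a semi-$\mathcal{A}$-Fredholm class.
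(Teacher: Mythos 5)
Your proposal is correct and follows essentially the same route as the paper: the same topological reduction (if $S\subseteq S'$ and $\partial S'\subseteq S$ then $\partial S'\subseteq\partial S$) to the three inclusions $\partial\sigma_{ew}^{\mathcal{A}}(\mathrm{F})\subseteq\sigma_{e\tilde a}^{\mathcal{A}}(\mathrm{F})$, $\partial\sigma_{e\tilde a}^{\mathcal{A}}(\mathrm{F})\subseteq\sigma_{ea}^{\mathcal{A}}(\mathrm{F})$, $\partial\sigma_{ea}^{\mathcal{A}}(\mathrm{F})\subseteq\sigma_{e\alpha}^{\mathcal{A}}(\mathrm{F})$, established with the same tools (the perturbation argument of \cite[Lemma 2.7.10]{MT}, openness of $\mathcal{M}\Phi_{+}(H_{\mathcal{A}})\setminus\mathcal{M}\Phi(H_{\mathcal{A}})$ and of the relevant classes, and the cancellation property to compare the finitely generated tails of two different decompositions). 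The only minor differences are that you prove the first inclusion by a direct local-constancy-of-index argument instead of routing it through Theorem \ref{t520}, you re-derive the content of \cite[Lemma 5.2]{I} (which the paper simply cites) at the cancellation step, and you make explicit the ambient inclusion $\mathcal{M}\Phi_{0}(H_{\mathcal{A}})\subseteq\tilde{\mathcal{M}\Phi}_{+}^{-}(H_{\mathcal{A}})$ that the paper's reduction uses tacitly.
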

\begin{proof}
	Again it suffices to show  
	$$\partial \sigma_{ew}^{\mathcal{A}} (\mathrm{F}) \subseteq 
	\sigma_{e\tilde{a}}^{\mathcal{A}} (\mathrm{F}), 
	\partial \sigma_{e\tilde{a}}^{\mathcal{A}} (\mathrm{F})  \subseteq 
	\sigma_{e{a}}^{\mathcal{A}} (\mathrm{F})   \textrm{ and } 
	\partial \sigma_{e{a}}^{\mathcal{A}} (\mathrm{F}) \subseteq 
	\sigma_{e\alpha}^{\mathcal{A}} (\mathrm{F})    .$$
	The first inclusion follows as in the proof of \cite[Theorem 2.7.5]{ZZRD},  since\\
	$\partial \sigma_{ew}^{\mathcal{A}} (\mathrm{F}) \subseteq \partial \sigma_{ek}^{\mathcal{A}} (\mathrm{F})  $  by Theorem \ref{t520} and since  $ \partial \sigma_{ek}^{\mathcal{A}} (\mathrm{F}) \subseteq  \sigma_{e\tilde{a}}^{\mathcal{A}} (\mathrm{F})  .$ \\
	To deduce the second inclusion, assume first that  $\alpha \in \partial \sigma_{e\tilde{a}}^{\mathcal{A}} (\mathrm{F}) \setminus    \sigma_{e{a}}^{\mathcal{A}} (\mathrm{F})  .$ Then  $(\mathrm{F}- \alpha \mathrm{I}) \in \mathcal{M} \Phi_{+}^{-}(H_{\mathcal{A}})   $  
	and  $ (\mathrm{F}- \alpha \mathrm{I}) \notin \tilde{\mathcal{M}} \Phi_{+}^{-}(H_{\mathcal{A}})  .$ It follows then that  $\mathrm{F}- \alpha \mathrm{I}   $  is in $ \mathcal{M} \Phi_{+}(H_{\mathcal{A}}) \setminus \mathcal{M} \Phi (H_{\mathcal{A}})   .$ But, since  $\mathcal{M} \Phi_{+}(H_{\mathcal{A}}) \setminus \mathcal{M} \Phi (H_{\mathcal{A}})    $  is open 
	by \cite[Theorem 4.1]{I} and  $\tilde{\mathcal{M}} \Phi_{+}^{-}(H_{\mathcal{A}})  \subseteq  \mathcal{M} \Phi (H_{\mathcal{A}})     $  by definition, it follows that  $ (\mathrm{F}- \alpha \mathrm{I}) \notin  \partial \tilde{\mathcal{M}} \Phi_{+}^{-}(H_{\mathcal{A}})   .$ This contradicts the choice of  $ \alpha \in \partial \sigma_{e\tilde{a}}^{\mathcal{A}} (\mathrm{F})  .$\\
	Hence  $\partial \sigma_{e\tilde{a}}^{\mathcal{A}} (\mathrm{F})  \subseteq  \sigma_{e{a}}^{\mathcal{A}} (\mathrm{F}) .$ For the last inclusion, assume that  $ \tilde{\alpha} \in \partial \sigma_{e{a}}^{\mathcal{A}} (\mathrm{F})    $  and that  $\tilde{\alpha} \notin  \sigma_{e\alpha}^{\mathcal{A}} (\mathrm{F})   .$
	Then  $(\mathrm{F}- \tilde{\alpha} \mathrm{I}) \in \mathcal{M} \Phi_{+}(H_{\mathcal{A}})   $  and  $(\mathrm{F}- \tilde{\alpha} \mathrm{I}) \notin \mathcal{M} \Phi_{+}^{-}(H_{\mathcal{A}})    .$ This means, by the definitions of  $\mathcal{M} \Phi_{+}^{-}(H_{\mathcal{A}})   $ and  $\mathcal{M} \Phi_{+}(H_{\mathcal{A}})    $ that $(\mathrm{F}- \tilde{\alpha} \mathrm{I}) \in \mathcal{M} \Phi(H_{\mathcal{A}}) $   and that given any decomposition  
	$$  H_{\mathcal{A}} = M_{1} \tilde \oplus {N_{1}}\stackrel{\mathrm{F}- \tilde{\alpha} \mathrm{I}}{\longrightarrow}   M_{2} \tilde \oplus N_{2}= H_{\mathcal{A}}  $$  
	w.r.t. which  $ (\mathrm{F}- \tilde{\alpha} \mathrm{I})  $  has the matrix
	$\left\lbrack
	\begin{array}{cc}
	(\mathrm{F}- \alpha \mathrm{I})_{1} & 0 \\
	0 & (\mathrm{F}- \tilde{\alpha} \mathrm{I})_{4} \\
	\end{array}
	\right \rbrack
	,$   
	where  $(\mathrm{F}- \tilde{\alpha} \mathrm{I})_{1}   $  is an isomorphism and  $ N_{1},N_{2}  $  are finitely generated, then  $ N_{1}  $  is\underline{ not} isomorphic to a closed submodule of  $N_{2}   $ . By the proof of  \cite[Lemma 2.7.10]{MT}  there exists an  $ \epsilon >0  $  such that if  $ \tilde{\alpha^{\prime}} \in \mathcal{A} $  and  $||\alpha - \tilde{\alpha}^{\prime}||< \epsilon , $  then  $(\mathrm{F}-  \tilde \alpha^{\prime} \mathrm{I}) \in  \mathcal{M} \Phi (H_{\mathcal{A}})  $  and  $(\mathrm{F}-  \tilde \alpha^{\prime} \mathrm{I})   $  has the matrix
	$\left\lbrack
	\begin{array}{cc}
	(\mathrm{F}-\tilde \alpha^{\prime} \mathrm{I})_{1} & 0 \\
	0 & (\mathrm{F}-\tilde \alpha^{\prime} \mathrm{I})_{4} \\
	\end{array}
	\right \rbrack
	$   
	w.r.t. the decomposition  
	$$  H_{\mathcal{A}} = M_{1} \tilde \oplus \mathrm{U}({N_{1}})\stackrel{\mathrm{F}- \tilde \alpha^{\prime} \mathrm{I}}{\longrightarrow} \mathrm{V}^{-1} (M_{2}) \tilde \oplus N_{2}= H_{\mathcal{A}} , $$  where  $(\mathrm{F}-\tilde \alpha^{\prime} \mathrm{I})_{1}   ,\mathrm{U},\mathrm{V}$  are isomorphisms. As  $ N_{1}  $  is not isomorphic to a closed submodule od  $N_{2}    $  and  $ \mathrm{U}  $  is an isomorphism from  $  H_{\mathcal{A}}  $  onto  $  H_{\mathcal{A}}  $ , it follows that  $ \mathrm{U}({N_{1}})  $  is \underline{not} isomorphic to a closed submodule of  $ N_{2}   $ . Now, if\\
	$(\mathrm{F}-\tilde{\alpha}^{\prime} \mathrm{I}) \in \mathcal{M} \Phi_{+}^{-}(H_{\mathcal{A}})   ,$  then we must have  $(\mathrm{F}-\tilde{ \alpha}^{\prime} \mathrm{I}) \in \tilde{\mathcal{M}}\Phi_{+}^{-}(H_{\mathcal{A}})    ,$\\
	as  $(\mathrm{F}-\tilde{\alpha}^{\prime} \mathrm{I}) \in  \mathcal{M} \Phi(H_{\mathcal{A}})   $  and  $ \tilde{\mathcal{M}} \Phi_{+}^{-}(H_{\mathcal{A}}) = \mathcal{M} \Phi_{+}^{-}(H_{\mathcal{A}})  \cap \mathcal{M} \Phi(H_{\mathcal{A}})    $  by definition. By \cite[Lemma 5.2]{I}, as $ K(\mathcal{A})$ satisfies "the cancelation" property, we must then have that  $ \mathrm{U}({N_{1}}) \preceq N_{2}   $  which is a contradiction. So  $ \partial \sigma_{e{a}}^{\mathcal{A}} (\mathrm{F}) \subseteq  \sigma_{e\alpha} (\mathrm{F}) .$
\end{proof}
\begin{flushright}
	$\boxdot$
\end{flushright}
Similarly one can show that
$$ \partial \sigma_{e w}^{\mathcal{A}} (\mathrm{F})  \subseteq  \partial \sigma_{e \tilde b}^{\mathcal{A}} (\mathrm{F}) \subseteq  \partial \sigma_{eb}^{\mathcal{A}} (\mathrm{F})  $$  
where  $$ \sigma_{e \tilde b}^{\mathcal{A}} (\mathrm{F})=\lbrace \alpha \in \mathcal{A}| (\mathrm{F}-\alpha \mathrm{I}) \notin \mathcal{M} \tilde{\Phi}_{-}^{+}(H_{\mathcal{A}})  \rbrace $$  
and
$$\sigma_{e b}^{\mathcal{A}} (\mathrm{F})=\lbrace \alpha \in \mathcal{A}| (\mathrm{F}-\alpha \mathrm{I}) \notin \mathcal{M} {\Phi}_{-}^{+}(H_{\mathcal{A}})   \rbrace .$$ 
and in addition
$\partial \sigma_{e b}^{\mathcal{A}} (\mathrm{F})  \subseteq  \partial \sigma_{e \beta}^{\mathcal{A}} (\mathrm{F})$ if $K({\mathcal{A}})$ satisfies "the cancellation property".\\


\begin{flushleft}
\textbf{Acknowledgement } I am especially grateful to my supervisor Professor Vladimir M. Manuilov for careful reading of my paper and for inspiring comments that led to the improved presentation of the paper. Also I am grateful to Professor Dragan S. Djordjevic for suggesting the research topic of this paper and for introducing to me the relevant reference books. Finally, I am grateful to the Referee for the comments and suggestions that led to the improved presentation of the paper.
\end{flushleft}

\begin{flushleft}
	{Stefan Ivkovi\'{c}}\\
	The Mathematical Institute of the Serbian Academy of Sciences and Arts, \\
	p.p. 367, Kneza Mihaila 36, 11000 Beograd, Serbia,\\
	Tel.: +381-69-774237 \\
	\email{stefan.iv10@outlook.com}     
\end{flushleft}      
	
\end{document}